\definecolor{refkey}{rgb}{0,0,1}
\definecolor{labelkey}{rgb}{1,0,0}
\newenvironment{claim}[1][{\textup{(\theequation)}}]{\refstepcounter{equation}\vglue10pt
\begin{trivlist}
\item[{\hskip\labelsep#1}]}{\vglue10pt\end{trivlist}}
\theoremstyle{plain}
\newtheorem{theorem}{Theorem}[chapter]
\newtheorem{proposition}[theorem]{Proposition}
\newtheorem{corollary}[theorem]{Corollary}
\theoremstyle{definition}
\theoremstyle{remark}
\newtheorem{remark}[theorem]{Remark}
\newtheoremstyle{plainfoot}%
 {\item[\hskip\labelsep \theorem@headerfont ##1\ ##2\,\footnotemark\theorem@separator]}%
 {\item[\hskip\labelsep \theorem@headerfont ##1\ ##2\ (##3)\, \footnotemark\theorem@separator]}
\theoremstyle{plainfoot}
\newtheorem{theorem-foot}[theorem]{Theorem}
\newtheorem{lemma-foot}[theorem]{Lemma}
\newtheorem{proposition-foot}[theorem]{Proposition}
\newtheorem{corollary-foot}[theorem]{Corollary}
\newtheorem{conjecture-foot}[theorem]{Conjecture}
\newtheorem{condition-foot}[condition]{Condition}
\theoremstyle{plainfoot}
\newtheorem{definition-foot}[theorem]{Definition}
\newtheorem{Problem-foot}[theorem]{Problem}
\theoremstyle{plainfoot}
\newtheorem{remark-foot}[theorem]{Remark}     
\newtheorem{example-foot}[theorem]{Example}
\newtheorem{problem-foot}[theorem]{Problem}
\numberwithin{equation}{chapter}
\newcounter{note}
\newcommand{\B}{\mathsf{B}}
\newcommand{\new}{\mathsf{new}}
\newcommand{\y}{\mathsf{y}}
\newcommand{\bR}{\mathbb{R}}
\newcommand{\bZ}{\mathbb{Z}}
\newcommand{\bS}{\mathbb{S}}
\newcommand{\cA}{\mathcal{A}}
\newcommand{\cB}{\mathcal{B}}
\newcommand{\cJ}{\mathcal{J}}
\newcommand{\cO}{\mathcal{O}}
\newcommand{\cT}{\mathcal{T}}
\newcommand{\cV}{\mathcal{V}}
\newcommand\sL{\mathscr{L}}
\newcommand\sH{\mathscr{H}}
\newcommand{\dist}{\operatorname{dist}}
\newcommand{\diam}{\operatorname{diam}}
\newcommand{\Spec}{\operatorname{Spec}}
\newcommand{\N}{\mathsf{N}}
\newcommand {\fH}{\mathfrak{H}}
\newcommand {\fV}{\mathfrak{V}}
\newcommand {\fW}{\mathfrak{W}}
\newcommand {\fX}{\mathfrak{X}}
\newcommand {\fZ}{\mathfrak{Z}}
\title{Bethe-Sommerfeld conjecture in semiclassical settings \thanks{\emph{2010 Mathematics Subject Classification}: 35P20.}\thanks{\emph{Key words and phrases}: Microlocal Analysis, sharp  spectral asymptotics, integrated density of states, periodic operators, Bethe-Sommerfeld conjecture.}
}
\author{Victor Ivrii\thanks{This research was supported in part by National Science and Engineering Research Council (Canada) Discovery Grant RGPIN 13827}}
\begin{document}

\maketitle

\begin{abstract}
Under certain assumptions (including $d\ge 2)$ we prove that the spectrum of  a scalar operator in $\sL^2(\bR^d)$
\begin{equation*}
A_\varepsilon (x,hD)= A^0(hD) + \varepsilon B(x,hD),
\end{equation*}
covers interval $(\tau-\epsilon,\tau+\epsilon)$,
where $A^0$ is an elliptic operator and $B(x,hD)$ is a periodic perturbation, $\varepsilon=O(h^\varkappa)$, $\varkappa>0$.

Further, we consider generalizations.
\end{abstract}

\chapter{Introduction}
\label{sect-1}

\section{Preliminary remarks}
\label{sect-1.1}

 This work is inspired by a paper \cite{ParSob} by L.~Parnovski and A.~Sobolev, in which a classical Bethe-Sommerfeld conjecture was proven for operators $(-\Delta)^m +B(x,D)$ with operator $B$ of order $<2m$. In this  paper the crucial role was played by a (pseudodifferential) gauge transformation and thorough analysis of the resonant set, both introduced in the papers of L.~Parnovski and R.~Shterenberg \cite{ParSht1, ParSht2, ParSht3}, S.~Morozov, L.~Parnovski and R.~Shterenberg \cite{MorParSht} and earlier papers by A.~Sobolev \cite{Sob1, Sob2}, devoted to complete asymptotics of the integrated density of states. 

Later in \cite{IvrIDS} I used  the gauge same transformation in the  semiclassical settings, which allowed me to generalize the results and simplify the proofs of those papers\footnote{\label{foot-1} The other components of the proof were not only completely different, but in the framework of the different paradigm.}. Now  I would like to apply this gauge transform to Bethe-Sommerfeld conjecture in the semiclassical settings. The results obtained are more general (except the smoothness with respect to $\xi$ assumptions in \cite{ParSob} are more general than here) and the proofs are simpler.

Consider a scalar self-adjoint $h$-pseudo-differential operator $A_h\coloneqq A(x,hD)$ in $\bR^d$ with the Weyl symbol $A(x,\xi)$, such that\footnote{\label{foot-2} In fact, we consider $A_h\coloneqq A(x,hD,h)$.}
\begin{gather}
|D^\alpha _x D^\beta_\xi A(x,\xi)|\le c_{\alpha\beta}(|\xi|+1)^m \qquad \forall \alpha,\beta 
\label{eqn-1.1}\\
\shortintertext{and}
A(x,\xi)\ge c_0 |\xi|^m - C_0 \qquad 
\forall (x,\xi)\in \bR^{2d}.
\label{eqn-1.2}
\end{gather}

Then $A_h$ is semibounded from below. Also we assume that it is $\Gamma$-periodic with the \emph{lattice of periods} $\Gamma$:
\begin{equation}
A(x+\y ,\xi)= A(x,\xi) \qquad \forall x\in \bR^n\quad \forall \y \in \Gamma.
\label{eqn-1.3}
\end{equation}
We assume that $\Gamma $ is non-degenerate\footnote{\label{foot-3} I.e. with the $\Gamma=\{\y=n_1\y_1+\ldots+n_d\y_d,\ (n_1,\ldots,n_d)\in \bZ^d\}$ with linearly independent $\y_1,\ldots,\y_d\in \bR^d$.} and denote by $\Gamma^*$ the \emph{dual lattice}: 
\begin{equation}
\gamma \in \Gamma^* \iff \langle \gamma, \y\rangle \in 2\pi \bZ \quad \forall \y\in \Gamma;
\label{eqn-1.4}
\end{equation}
since we  use $\Gamma^*$ and it's elements in the paper much more often, than $\Gamma$ and it's elements, it is more convenient for us to reserve notation $\gamma$ for elements of $\Gamma^*$.

Also let $\cO  = \bR^d/\Gamma$ and $\cO ^* =\bR^d/\Gamma^*$ be \emph{fundamental domains}; we identify them with domains in $\bR^d$.

It is well-known that $\Spec (A)$ has a \emph{band-structure}. Namely, consider in $\sL^2(\cO )$ operator 
$A_h(\upxi)= A(x,hD)$ with the \emph{quasi-periodic boundary condition}:
\begin{equation}
u(x+ \y )=  e^{i \langle \y,\upxi\rangle} u(x)\qquad \forall x\in \cO  \quad  \forall \y \in \Gamma 
\label{eqn-1.5}
\end{equation}
with $\upxi \in \cO ^*$; it is called a \emph{quasimomentum}.  Then $\Spec (A_h(\upxi))$ is discrete 
\begin{equation}
\Spec (A_h(\upxi))=\bigcup _n \lambda _{n,h} (\upxi)
\label{eqn-1.6}
\end{equation}
and depends on $\upxi$ continuously. Further,
\begin{equation}
\Spec (A_h)= \bigcup_{\upxi \in \cO ^*}\Spec (A_h(\upxi))\eqqcolon \bigcup_n \Lambda_{n,h},
\label{eqn-1.7}
\end{equation}
with the \emph{spectral bands}
$\Lambda_{n,h}\coloneqq\bigcup_{\upxi \in \cO ^*}\{\lambda _{n,h} (\upxi)\}$. 

One can prove that the with of the spectral band  near energy level $\tau$ is $O(h)$. Spectral bands could overlap but they also could leave uncovered intervals, called \emph{spectral gaps}. It follows from \cite{IvrIDS} that in our assumptions (see below) the width of the spectral gaps near energy level $\tau$ is $O(h^\infty)$. Bethe-Sommerfeld conjecture in the semiclassical settings claims that there are no spectral gaps near energy level $\tau$ (in the corresponding assumptions, which include $d\ge 2$).

\section{Main theorem (statement)}
\label{sect-1.2}
We assume that    
\begin{equation}
A_h\coloneqq A(x,hD)=A^0(hD)+\varepsilon B(x,hD),
\label{eqn-1.8}
\end{equation}
where $A^0(\xi)$ satisfies (\ref{eqn-1.1}), (\ref{eqn-1.2}) and $B(x,\xi)$ satisfies (\ref{eqn-1.1}) and (\ref{eqn-1.3}) and $\varepsilon >0$ is a small parameter.  For $A^0(\xi)$ instead of $\lambda_n(\upxi)$ we have 
\begin{equation}
\lambda^0_\gamma (\upxi)\coloneqq A^0(h(\gamma +\upxi))  \qquad\text{with\ \ } \gamma \in \Gamma^*.
\label{eqn-1.9}
\end{equation}
Recall that (as in \cite{IvrIDS})
\begin{gather}
B(x,\xi)=\sum_{\gamma \in \Gamma} b_{\gamma}(\xi)e^{i\langle \gamma,x\rangle}
\label{eqn-1.10}\\
\intertext{with $\Theta=\Gamma^*$ where due to (\ref{eqn-1.1})}
|D^\beta _\xi b_\gamma (\xi)|\le C_{L\beta} (|\gamma|+1)^{-L}(|\xi|+1)^m\qquad \forall \beta \quad 
\forall (x,\xi)\in \bR^{2d}
\label{eqn-1.11}
\end{gather}
with an arbitrarily large exponent $L$.

\begin{theorem}\label{thm-1.1}
Let $d\ge 2$ and let operator $A_h$ be given by \textup{(\ref{eqn-1.8})} with $\varepsilon=O(h^\vartheta)$ with arbitrary $\varkappa>0$ and with $A^0_h=A^0(hD)$ satisfying \textup{(\ref{eqn-1.1})}, \textup{(\ref{eqn-1.2})} and $B(x,\xi)$ satisfying \textup{(\ref{eqn-1.1})} and \textup{(\ref{eqn-1.3})}.

Further,  assume that the \emph{microhyperbolicity}  and \emph{strong convexity} conditions  on the energy level $\tau$ are fulfilled:
\begin{equation}
|A^0(\xi) - \tau| +|\nabla_\xi A^0(\xi)|\ge \epsilon_0
\label{eqn-1.12}
\end{equation}
and 
\begin{multline}
\pm \sum_{j,k} A^0_{\xi_j\xi_k}(\xi)\eta_j\eta_k \ge \epsilon_0 |\eta|^2\\
 \forall \xi\colon A^0(\xi)=\tau\ \ \forall \eta\colon \sum_j  A^0_{\xi_j}(\xi)\eta_j=0.
\label{eqn-1.13}
\end{multline}
Furthermore, assume that there exists $\xi\in \Sigma_\tau$ such that for every $\eta\in \Sigma_\tau$, $\eta\ne \xi$, such that $\nabla _\eta A^0(\eta)$ is parallel  to $\nabla _\xi A^0(\xi)$\,\footnote{\label{foot-4} I.e. $\eta A^0(\eta)=\nu \nabla _\xi A^0(\xi)$ with $\nu\ne 0$; we call $\eta$ \emph{antipodal pont}.}
\begin{claim}\label{eqn-1.14}
$\Sigma_\tau $, intersected with some vicinity of $\eta$ and shifted by $(\xi-\eta)$, coincides in the vicinity of $\xi$ with $\{\zeta\colon \zeta_k=g(\zeta_{\hat{k}}\}$ and
 $\Sigma_\tau$ coincides in the vicinity of $\xi$ with $\{\zeta\colon \zeta_k=f(\zeta_{\hat{k}}\}$\,\footnote{\label{foot-5} With $\zeta_{\hat{k}}$ meaning all coordinates except $\zeta_k$.} 
and  $\nabla^\alpha (f-g)(0)\ne 0$ for some $\alpha\colon|\alpha|=2$\,\footnote{\label{foot-6} Obviously $\nabla (f-g)(0)=0$. One can prove easily, that if this condition holds at $\xi$ with some $\alpha\colon |\alpha|>2$, then changing slightly $\xi$, we make it fulfilled with $|\alpha|=2$.}. 
\end{claim}

Then $\Spec (A_h) \supset [\tau-\epsilon,\tau+\epsilon]$ for sufficiently small $\epsilon>0$.
\end{theorem}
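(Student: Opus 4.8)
The plan is to reduce the Bethe--Sommerfeld statement to a counting/comparison argument for the eigenvalues $\lambda_{n,h}(\upxi)$ of the fiber operators $A_h(\upxi)$, exploiting the gauge transformation and resonance analysis imported from \cite{IvrIDS}. First I would fix a target energy $\tau'\in[\tau-\epsilon,\tau+\epsilon]$ and show it belongs to $\Spec(A_h)$ by producing a quasimomentum $\upxi\in\cO^*$ with $\tau'\in\Spec(A_h(\upxi))$. Since $\Spec(A_h(\upxi))$ depends continuously on $\upxi$, it suffices to exhibit two quasimomenta $\upxi^{\pm}$ and a single band index $n$ (or a continuous path of indices) with $\lambda_{n,h}(\upxi^-)\le \tau'\le \lambda_{n,h}(\upxi^+)$; then the intermediate value theorem along a path from $\upxi^-$ to $\upxi^+$ finishes it. So the whole problem is to control the \emph{location} of individual eigenvalues to accuracy $o(h)$, i.e.\ finer than the $O(h)$ band width.

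The mechanism for this is the pseudodifferential gauge transform of Parnovski--Sobolev/Parnovski--Shterenberg in the semiclassical form of \cite{IvrIDS}: conjugating $A_h$ by a suitable unitary $e^{i\Psi(x,hD)/h}$ reduces $A_\varepsilon(x,hD)=A^0(hD)+\varepsilon B(x,hD)$, modulo $O(h^\infty)$ errors after iterating, to an operator that is block-diagonal with respect to the decomposition of $\Gamma^*$ into the non-resonant set and the resonant layers. On the non-resonant part the symbol becomes a function of $h(\gamma+\upxi)$ alone, so its eigenvalue is essentially $A^0(h(\gamma+\upxi)) + \varepsilon^2(\text{shift})$, a smooth explicit function of $\upxi$; here the microhyperbolicity condition \eqref{eqn-1.12} guarantees $|\nabla_\xi A^0|\ge\epsilon_0$ on the energy shell $\Sigma_\tau$, so as $\upxi$ varies this value sweeps an interval of length $\asymp h$ transversally, and already a single such band fiber realizes an $O(h)$-long subinterval of the spectrum. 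The issue is that consecutive such subintervals (for $\gamma$ and a neighbouring lattice point) might fail to overlap, leaving a gap of size $o(h)$ but nonzero.

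To close those gaps I would bring in the strong convexity \eqref{eqn-1.13} together with the antipodal/second-order condition \eqref{eqn-1.14}. Strong convexity makes $\Sigma_\tau$ a smooth strictly convex hypersurface, so the number of lattice points $\gamma+\upxi$ within distance $Ch$ of it, as $\upxi$ ranges over $\cO^*$, is large ($\asymp h^{1-d}$ with multiplicity spread), and the family of values $A^0(h(\gamma+\upxi))$ for those $\gamma$ is, for a well-chosen $\upxi$, \emph{equidistributed} on a scale $\ll h$ across a fixed window around $\tau$. Equivalently: choosing $\xi\in\Sigma_\tau$ as in the hypothesis and its antipodal point $\eta$, the two sheets of $\Sigma_\tau$ near $\xi$ and (the shift of) $\eta$ differ at second order by \eqref{eqn-1.14}; pairing the lattice point near $\xi$ with the one near $\eta$ produces, for varying $\upxi$, a pair of eigenvalue branches whose difference has nonvanishing second derivative, hence crosses every level in the window and, by the implicit function theorem, covers a full neighbourhood of $\tau$ without gaps. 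The perturbation $\varepsilon B$, with $\varepsilon=O(h^\vartheta)$, only shifts eigenvalues by $O(\varepsilon)=o(h)$ after the gauge transform (the dangerous resonant contributions are handled exactly as in \cite{IvrIDS} and are of even higher order), so it cannot destroy the covering; one then lets $\epsilon$ be a fixed small constant independent of $h$, using that all constructions are uniform.

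The main obstacle, I expect, is the resonant set. The clean ``eigenvalue $=A^0(h(\gamma+\upxi))+$ explicit shift'' picture holds only off the resonant layers, and near $\Sigma_\tau$ the relevant lattice points can be resonant for an uncontrolled set of quasimomenta; one must show (i) that the resonant quasimomenta form a small-measure, lower-dimensional set that can be \emph{avoided} when choosing $\upxi$, or (ii) that even on resonant layers the block-diagonal reduced operator has eigenvalues that still move monotonically/convexly in $\upxi$ with the right second-order behaviour. This is precisely where the hypotheses \eqref{eqn-1.12}--\eqref{eqn-1.14} must be used quantitatively and where the argument of \cite{IvrIDS} on the structure and measure of the resonant set is indispensable; getting the avoidance compatible with the antipodal pairing is the delicate point. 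The remaining steps — the gauge iteration, the $O(h^\infty)$ error bookkeeping, and the final intermediate-value/implicit-function conclusion — are routine given the machinery.
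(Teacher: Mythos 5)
Your overall architecture does match the paper's: gauge transform to block-diagonalize near $\Sigma_\tau$, work with the eigenvalues of the reduced operator, use continuity/intermediate value along a path of quasimomenta, and treat non-resonant and resonant lattice points separately. But the core technical step---how to actually \emph{find} a quasimomentum $\upxi^*$ for which exactly one fiber eigenvalue sits at $\tau$ and all others stay at distance $\ge \epsilon\upsilon h$---is precisely where your sketch stops, and the two surrogates you offer (``equidistribution on a scale $\ll h$'', and ``the pair difference has nonvanishing second derivative, hence covers a full neighbourhood'') are not that step. The paper's engine is Theorem~\ref{thm-1.3} together with an explicit \emph{averaging/measure} argument (Section~\ref{sect-3}): having chosen a non-resonant $\xi^*=h(\gamma^*+\upxi^*)\in\Sigma'_\tau$ satisfying (\ref{eqn-1.14}), one deforms $\upxi$ along $\updelta\upxi(t)=t\eta+O(t^2)$ tangent to $\Sigma'_\tau$ so that $\lambda_{\gamma^*}(\upxi(t))\equiv\tau$; for any other $\gamma$ the ``bad'' set $\cT(\xi)=\{t\colon|\lambda_\gamma(\upxi(t))-\tau|\le\upsilon h\}$ has measure $\asymp\upsilon\cdot|\langle\nabla\cA^0(h\gamma),\eta\rangle|^{-1}$, and a union bound over the relevant $\gamma$ gives a good $t$ provided $R\upsilon\le\epsilon'$, with $R=\sum_\gamma|\langle\nabla\cA^0(h\gamma),\eta\rangle|^{-1}$ estimated explicitly; this estimate is what \emph{produces} the value of $\upsilon$ in (\ref{eqn-1.15}). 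For $d\ge3$ the argument must be iterated over $d-1$ orthogonal tangent directions, shrinking the allowed step at each stage --- you do not mention this iteration, and without it the union bound fails in higher dimensions.

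Your reading of (\ref{eqn-1.14}) is also off target. It is not used to make a \emph{pair} of branches sweep the window; the single branch $\lambda_{\gamma^*}$ sweeps the window (using the nonvanishing normal derivative), and (\ref{eqn-1.14}) is needed to control the \emph{antipodal} lattice points, where $\langle\nabla\cA^0,\eta\rangle\approx0$ renders the linear bad-set bound useless. At those points the separation is only quadratic in $t$, giving a bad-set measure $\lesssim h^{-1}(\upsilon h)^{1/2}$, small enough to absorb into the union bound; without this quantitative use of (\ref{eqn-1.14}), antipodal collisions are uncontrolled and the isolation statement fails. Similarly, the resonant set is handled not by showing resonant quasimomenta can be avoided, but by showing that inside each resonant block $\cA_\gamma(\upxi)$ the eigenvalues still move with derivative of size $\asymp h|\sin\varphi(\xi,\xi^*)|\gtrsim h^{1+\delta}$ along $\eta$ (since $\cB$ contributes only $O(\varepsilon h)\ll h^{1+\delta}$ to the derivative), so that the same measure bound applies with the resonant count (\ref{eqn-3.14}) replacing the non-resonant one, yielding the corrected $R$ of (\ref{eqn-3.17}). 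In short, your plan points in the right direction, but the quantitative averaging lemma --- which is the actual content of the proof --- is missing, and your interpretation of the antipodal hypothesis would lead you to estimate the wrong quantity.
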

\enlargethispage{3\baselineskip}

\begin{remark}\label{rem-1.2}
\begin{enumerate}[label=(\roman*), wide, labelindent=0pt]
\item\label{rem-1.2-i}
If $\Sigma_\tau$ is strongly convex and connected then for every $\xi\in \Sigma_\tau$ there exists exactly one antipodal point $\eta\in \Sigma_\tau$; then $\nu<0$ and assumption (\ref{eqn-1.14}) is fulfilled. In particular, if $A^0(\xi)=|\xi|^{m}$,  then $\eta = -\xi$ and $\nu=-1$.

\item\label{rem-1.2-ii}
If $\Sigma_\tau$ is is strongly convex and consists of $p$ connected components, then the set  
$\fZ(\xi) =\{\eta\in \Sigma_\tau, \eta \ne \xi \colon \nabla _\eta A^0(\eta)\parallel  \nabla _\xi A^0(\xi)\}$ contains exactly $2p-1$ elements, and for $p$ of them $\nu<0$ and assumption (\ref{eqn-1.14}) is fulfilled for sure, while for $(p-1)$ of them  $\nu >0$. \end{enumerate}
\end{remark}

\section{Idea of the proof and the plan of the paper}
\label{sect-1.3}

One needs to understand, how gaps could appear, why they appear if $d=1$ and why it is not the case if $d\ge 2$. Observe that  $\lambda_n (\upxi)$ can be identified with some $\lambda^0_\gamma(\upxi)$ only locally, if 
$\lambda^0_\gamma(\upxi)$ is sufficiently different from $\lambda^0_{\gamma'}(\upxi)$ for any 
$\gamma'\ne \gamma$. 

Indeed, in the basis of eigenfunctions of $A^0_\upxi (hD)$\,\footnote{\label{foot-7} Consisting of $\exp (i\langle x, \gamma +\upxi\rangle)$.} perturbation $\varepsilon B(x, hD)$ can contain out-of-diagonal elements 
$\varepsilon b_{\gamma-\gamma'}(\upxi)$ and such identification is possible only if  
$|\lambda^0_\gamma(\upxi)-\lambda^0_{\gamma'}(\upxi)|$ is larger than the size of such element. 

If $d=1$, $A^0(\xi)=\xi^2$ and $\varepsilon \le \epsilon' h$ with sufficiently small $\epsilon'>0$ and 
$\tau \asymp 1$, it can happen only if $\gamma'$ coincides with $-\gamma$ or with one of two adjacent points in $\Gamma^*$ and  $|\upxi-\frac{1}{2}(\gamma +\gamma')|=O(\varepsilon h^\infty)$. This exclude from possible values of either  $\lambda^0_\gamma (\upxi)$ or $\lambda^0_{\gamma'}(\upxi)$ the interval of the width $O(\varepsilon h^\infty)$ and on such interval can happen (and really happens for a generic perturbation) the realignment:

\begin{figure}[h]
\centering
\subfloat[]{%
\begin{tikzpicture}
\draw[thin,->] (-1.5,-1.5)--(1.5,-1.5) node [below] {$\upxi$};
\draw (-1.5,-1) -- (1.5,1) node[right] {$\lambda^0_n(\upxi)$};
\draw (-1.5,1) -- (1.5,-1) node[right] {$\lambda^0_m(\upxi)$};
\end{tikzpicture}}
\qquad\qquad
\subfloat[]{%
\begin{tikzpicture}[line cap=round,line join=round,x=1cm,y=1cm,
     spy using outlines={circle,lens={scale=3}, size= 1cm, connect spies}]
    \spy [blue] on (0.0,0.0)   in node [right] at (2.5,0);
\draw[thin,->] (-1.5,-1.5)--(1.5,-1.5) node [below] {$\upxi$};
\draw (-1.5,-1) -- (-.15,-.1) .. controls (0,-.05) .. (.15,-.1)--(1.5,-1) node[right] {$\lambda_n(\upxi)$};
\draw (-1.5,1) -- (-.15,.1) ..   controls (0,.05) ..  (.15,.1)-- (1.5,1) node[right] {$\lambda_m(\upxi)$};
\draw[red] (0,-.055)--(0,.055);
\end{tikzpicture}}
\caption{Spectral gap is a red interval}
\label{Fig-1}
\end{figure}
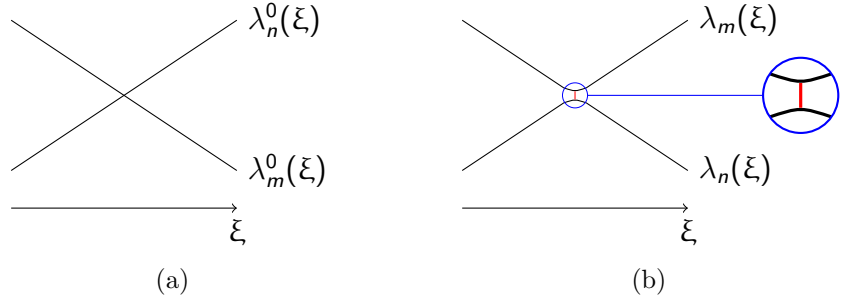
 
If $d\ge 2$ the picture becomes more complicated:  there are much more opportunities for 
$\lambda^0_\gamma (\upxi)$ and $\lambda^0_{\gamma'}(\upxi)$ to become close, even if $\gamma $ and $\gamma'$ are not that far away; on the other hand, there is a much more opportunities for us to select $\xi =h(\gamma +\upxi)\in \Sigma_\tau$ and then to adjust $\upxi$ so that $\xi =h(\gamma +\upxi)$ remains on $\Sigma_\tau$ but $\eta=h(\gamma' +\upxi)$ moves away from $\Sigma_\tau$ sufficiently far away\footnote{\label{foot-8} This will happen if either $\nabla_\eta A^0(\eta)$ differs from $\nu\nabla_\xi A^0(\xi)$, or if coincides with it but (\ref{eqn-1.14}) is fulfilled.} and then tune-up $\upxi$ once again so that $\tau\in \Spec(A_h(\upxi))$.

In fact, we prove the following statement which together with Theorem~\ref{thm-1.1} (which follows from it trivially) are semiclassical analogue of Theorem~2.1 of \cite{ParSob}:

\begin{theorem}\label{thm-1.3}
In the framework of Theorem~\ref{thm-1.1} there exist $n$ and $\upxi^*$ such that $\lambda_n(\upxi^*)=\tau$ and $\lambda_n (\upxi)$ covers interval  $[\tau-\upsilon h,\tau + \upsilon  h]$ when $\upxi$ runs ball 
$\B (\upxi^*, \upsilon)$ while  $|\lambda_m(\upxi)-\tau|\ge \epsilon \upsilon h$ for all $m\ne n$ and 
$\upxi\in\B (\upxi^*, \upsilon)$. Here 
\begin{equation}
\upsilon=\epsilon \left\{\begin{aligned}
h^{(d-1)^2} &\min(1 ,\,\varepsilon^{-3(d-1)/2}h^{(d-1)+\sigma}) && d\ge 3,\\
h  &\min(|\log h|^{-1} ,\,\varepsilon^{-3/2}h^{\sigma}) &&d=2
\end{aligned}\right.
\label{eqn-1.15}
\end{equation}
with arbitrarily small exponent $\sigma>0$.
\end{theorem}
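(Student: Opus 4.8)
The plan is to reduce the problem to a careful analysis of the operator $A_h(\upxi)$ restricted, after the gauge transformation of \cite{IvrIDS}, to a low-dimensional invariant (or nearly invariant) subspace associated with a cluster of unperturbed eigenvalues $\lambda^0_\gamma(\upxi)$ lying near $\tau$. First I would fix $\xi\in\Sigma_\tau$ as in the hypothesis of Theorem~\ref{thm-1.1} — one for which the antipodal points $\eta$ either have $\nabla_\eta A^0(\eta)$ not parallel to $\nabla_\xi A^0(\xi)$, or, when parallel, satisfy the second-order transversality condition \eqref{eqn-1.14} — and choose $\upxi^*$ (and the corresponding $\gamma$) so that $\xi=h(\gamma+\upxi^*)$. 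The strong convexity \eqref{eqn-1.13} of $\Sigma_\tau$ means that, among all $\gamma'\in\Gamma^*$, only finitely many (those whose $h(\gamma'+\upxi^*)$ lie within $O(h^s)$ of $\Sigma_\tau$ for a suitable $s$) can be "resonant" with $\gamma$; away from this resonance set the gauge transform of \cite{IvrIDS} removes the off-diagonal coupling $\varepsilon b_{\gamma-\gamma'}$ down to $O(h^\infty)$, and the eigenvalue $\lambda_n(\upxi)$ near $\tau$ is, up to $O(h^\infty)$, an eigenvalue of a finite matrix whose size is controlled by the geometry of $\Sigma_\tau$.

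Next I would estimate, using \eqref{eqn-1.12}, how far the "partner" points $\eta=h(\gamma'+\upxi)$ can be pushed off $\Sigma_\tau$ while keeping $\xi=h(\gamma+\upxi)$ on it: varying $\upxi$ in a ball $\B(\upxi^*,\rho)$ moves $\xi$ along $\Sigma_\tau$ by $O(\rho)$ and, because $\nabla_\eta A^0(\eta)$ is not parallel to $\nabla_\xi A^0(\xi)$ (the generic case), moves $A^0(\eta)$ away from $\tau$ at a linear rate $\asymp\rho$; in the antipodal-but-transversal case condition \eqref{eqn-1.14} gives a quadratic lower bound $\asymp\rho^2$, which is the worst case and dictates the exponents in \eqref{eqn-1.15}. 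Tracking how many such resonant partners must be simultaneously separated, and over how small a sub-ball of $\upxi$'s this can be arranged, yields the scale $\upsilon$: the factor $h^{(d-1)^2}$ (resp. $h$ for $d=2$) is the price of controlling the $\sim(d-1)$-dimensional family of near-resonances, and the factor $\min(\cdot,\varepsilon^{-3(d-1)/2}h^{(d-1)+\sigma})$ comes from balancing the size $\varepsilon$ of the perturbation against these geometric separations (the $\varepsilon^{-3/2}$ power being the standard loss from second-order perturbation theory on the transversal clusters). Once all $m\ne n$ are separated by $\ge\epsilon\upsilon h$, the single band $\lambda_n(\upxi)$ behaves on $\B(\upxi^*,\upsilon)$ essentially like the smooth function $A^0(h(\gamma+\upxi))+O(\varepsilon)$, whose gradient in $\upxi$ is $\asymp h$ by \eqref{eqn-1.12}; hence $\lambda_n$ sweeps out an interval of length $\asymp\upsilon h$ around $\lambda_n(\upxi^*)=\tau$, which is \eqref{eqn-1.15}.

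The main obstacle I expect is the bookkeeping of the resonance set in dimension $d\ge3$: one must show that the number of $\gamma'$ that can be resonant with $\gamma$ near energy $\tau$, and the multi-scale structure of how close they get, is compatible with finding a ball $\B(\upxi^*,\upsilon)$ of the claimed (not-too-small) radius on which \emph{all} of them are pushed uniformly far from $\Sigma_\tau$ except the chosen one. This is where the strong convexity \eqref{eqn-1.13}, the dimensional count leading to $(d-1)^2$, and the careful use of the gauge transform from \cite{IvrIDS} (to guarantee the off-resonant coupling is genuinely $O(h^\infty)$ uniformly) all have to be combined; the transversality hypothesis \eqref{eqn-1.14} is precisely what rescues the one unavoidable antipodal resonance. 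After that, deriving Theorem~\ref{thm-1.1} is immediate: as $\upxi^*$ ranges so that $\lambda_n(\upxi^*)$ covers a neighborhood of $\tau$, the intervals $[\tau'-\upsilon h,\tau'+\upsilon h]$ overlap and fill $[\tau-\epsilon,\tau+\epsilon]$, so there are no gaps.
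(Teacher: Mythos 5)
Your proposal captures the broad architecture (gauge transform to block-diagonalize, exploit (\ref{eqn-1.14}) for antipodal points, vary $\upxi$ to separate competing eigenvalues), but there are three substantive gaps relative to the actual argument.

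First, you conflate the \emph{coupling} issue with the \emph{separation} issue. After the gauge transform, $\lambda_{\gamma^*}$ is decoupled from all $\lambda_{\gamma'}$ for non-resonant $\gamma'$, but the theorem requires $|\lambda_m(\upxi)-\tau|\ge\epsilon\upsilon h$ for \emph{every} $m\ne n$, and there are $\asymp h^{1-d}$ lattice points $h(\gamma'+\upxi)$ within $O(h)$ of $\Sigma_\tau$, each contributing an eigenvalue that can hit $\tau$. Your ``only finitely many can be resonant'' understates this: the competing eigenvalues are not a bounded set, and the whole quantitative difficulty is pushing all $O(h^{1-d})$ of them away from $\tau$ simultaneously.

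Second, and most importantly, you do not describe the mechanism that actually produces the exponent $(d-1)^2$. The paper performs a $(d-1)$-step iterative sweep: one chooses orthonormal tangent directions $\eta_1,\dots,\eta_{d-1}$ to $\Sigma'_\tau$ at $\xi^*$, moves $\upxi$ along $\eta_1$ over $|t|\le\epsilon_0$ while staying on $\{\lambda_{\gamma^*}=\tau\}$, and finds a subinterval of length $\upsilon_1$ on which the first batch of $\lambda_\gamma$'s (those with $|\langle\nabla\cA^0,\eta_1\rangle|$ not too small) is separated; then restricts to that subinterval and repeats with $\eta_2$, shrinking to $\upsilon_2=\epsilon R^{-1}\upsilon_1$, and so on. With $R\asymp h^{1-d}$ (the weighted count (\ref{eqn-3.11}) of lattice points near $\Sigma_\tau$), one gets $\upsilon_{d-1}\asymp R^{1-d}\asymp h^{(d-1)^2}$. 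Saying the factor $h^{(d-1)^2}$ is ``the price of controlling the $(d-1)$-dimensional family of near-resonances'' is a plausible-sounding heuristic, but it does not substitute for this nested construction, and without it the exponent is not recoverable.

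Third, the $\varepsilon^{-3(d-1)/2}$ factor is not a second-order perturbation-theory loss. It comes from the measure bound (\ref{eqn-2.5}) on the resonant set: the resonant zone near $\Sigma_\tau$ has $\upmu_\tau$-measure $O(r^{d-1}\rho)$ with $\rho=\varepsilon^{1/2}h^{-\delta}$, and intersected with the energy shell of width $\varepsilon h^{-\delta}$ this gives $O(\varepsilon^{3/2}h^{-d-\sigma})$ resonant lattice points (equation (\ref{eqn-3.14})), which replaces the first term in $R$ (cf.\ (\ref{eqn-3.17})) when it dominates. Your attribution to perturbation theory would not lead one to the correct count, nor to the block-by-block treatment of resonant clusters in Section~\ref{sect-3.3} where one checks that the eigenvalues of $\cA_{[\gamma]}(\upxi)$ still satisfy the monotonicity (\ref{eqn-3.10}) needed for the sweep.
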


Proof of Theorem~\ref{thm-1.3} occupies two next sections. In Section~\ref{sect-2} we reduce operator in the vicinity of $\Sigma_\tau$ to the block-diagonal form and study its structure. To do this we need to examine the structure of the resonant set of the operator. In Section~\ref{sect-3} we prove Theorem~\ref{thm-1.3} and thus Theorem~\ref{thm-1.1}.

Finally, in  Section~\ref{sect-4} we discuss our results and the possible improvements.

\chapter{Reduction of operator}
\label{sect-2}

\section{Reduction}
\label{sect-2.1}

On this step we reduce $A$ to the block-diagonal form in the vicinity~of~$\Sigma_\tau$
\begin{equation}
\Omega_\tau\coloneqq \{ \xi\colon |A^0(\xi) - \tau|\le C\varepsilon h^{-\delta}\}.
\label{eqn-2.1}
\end{equation}
In what follows, we assume that $\varepsilon \ge h$, i.e. 
\begin{equation}
h\le \varepsilon \le h^\vartheta,\qquad \vartheta >0.
\label{eqn-2.2}
\end{equation}

To do this we need just to repeat with the obvious modifications definitions and arguments of Sections~\ref{IDS-sect-1} and~\ref{IDS-sect-2}  of \cite{IvrIDS}. Namely, now  $\Theta\coloneqq \Gamma^*$ is a non-degenerate lattice rather than the pseudo-lattice, as it was in that paper, and all conditions \ref{IDS-cond-A}, \ref{IDS-cond-B}, \ref{IDS-cond-C}, \ref{IDS-cond-D}, and \ref{IDS-cond-E},
are fulfilled with $\Theta'\coloneqq\Theta \cap \B(0,\omega)$  with $\omega=h^{-\varkappa}$ where we select sufficiently small $\varkappa>0$  later and $\Theta'_K=\Theta \cap \B(0,K\omega)$ be an arithmetic sum of $K$ copies of $\Theta'$ with sufficiently large $K$ to be chosen later. 

We call point $\xi$ \emph{non-resonant} if 
\begin{equation}
|\langle \nabla _\xi A^0(\xi),\theta \rangle| \ge \rho \qquad \forall \theta \in \Theta'_K\setminus 0
\label{eqn-2.3}
\end{equation}
with $\rho\in [\varepsilon^{1/2}h^{-\delta},h^\delta]$ with arbitrarily small $\delta>0$. Otherwise we call it \emph{resonant}. More precisely
\begin{equation}
\Lambda  \coloneqq \bigcup_{\theta \in \Theta'_K\setminus 0} \Lambda (\theta),
\label{eqn-2.4}
\end{equation}
where $\Lambda (\theta)$ is the set of $\xi $, violating (\ref{eqn-2.3})  for given $\theta \in \Theta'_K\setminus 0$.

It obviously follows from the microhyperbolicity and strong convexity assumptions (\ref{eqn-1.12}) and (\ref{eqn-1.13}) that

\begin{claim}\label{eqn-2.5}
$\upmu_\tau $-measure\footnote{\label{foot-9} $\upmu_\tau=d\xi:dA^0(\xi)$ is a natural measure on $\Sigma_\tau$.} of  
$\Lambda \cap\Sigma_\tau$, does not exceed $C_0 r^{d-1}\rho$ and Euclidean measure of
$\Lambda\cap \{ \xi\colon |A^0(\xi) - \tau|\le \varsigma\}$ does not exceed $C_0 r^{d-1}\rho\varsigma$, where $r=K h^{-\varkappa}$.
\end{claim}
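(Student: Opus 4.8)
The plan is to bound the contribution of a single resonant hyperplane $\Lambda(\theta)$ and then sum over $\theta\in\Theta'_K\setminus 0$: since $\Lambda=\bigcup_\theta\Lambda(\theta)$,
\[
\upmu_\tau(\Lambda\cap\Sigma_\tau)\le\sum_{\theta\in\Theta'_K\setminus 0}\upmu_\tau(\Lambda(\theta)\cap\Sigma_\tau),
\]
and likewise for the Euclidean measure. Note first that $\Sigma_\tau$ is compact by the ellipticity \textup{(\ref{eqn-1.2})}, and that by microhyperbolicity \textup{(\ref{eqn-1.12})} together with \textup{(\ref{eqn-1.1})} one has $c\le|\nabla_\xi A^0(\xi)|\le C$ on a fixed neighbourhood $U$ of $\Sigma_\tau$; hence the level sets $\Sigma_s=\{A^0=s\}$ are, for $s$ near $\tau$, smooth compact hypersurfaces varying smoothly with $s$, and strong convexity \textup{(\ref{eqn-1.13})} holds on all of them with one uniform constant.

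Next I would treat a single $\theta$ on $\Sigma_\tau$ itself. Writing $\nabla_\xi A^0(\xi)=|\nabla_\xi A^0(\xi)|\,n(\xi)$ with $n(\xi)$ the unit normal to $\Sigma_\tau$, condition \textup{(\ref{eqn-2.3})} fails at $\xi$ exactly when $n(\xi)$ lies in the band $\cB_\theta=\{m\in\bS^{d-1}\colon|\langle m,\theta\rangle|<\rho/|\nabla_\xi A^0(\xi)|\}\subseteq\{m\colon|\langle m,\theta\rangle|<C\rho\}$ about the equator orthogonal to $\theta$. Since $\Gamma^*$ is non-degenerate, every $\theta\in\Theta'_K\setminus 0$ has $|\theta|\ge c_1>0$, so this band has width $\asymp\rho/|\theta|$ (and $\ll 1$ in the regime $\rho\le h^\delta$), hence $\bS^{d-1}$-measure $\le C\rho/|\theta|$. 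By strong convexity the Gauss map $G\colon\Sigma_\tau\ni\xi\mapsto n(\xi)\in\bS^{d-1}$ has differential the shape operator, whose eigenvalues lie in $[\epsilon_0/C,C/c]$; thus $G$ is a local diffeomorphism of uniformly bounded distortion, and being a proper local diffeomorphism onto the connected manifold $\bS^{d-1}$ (here $d\ge2$) it is a finite covering with a uniformly bounded number of sheets. Therefore $\upmu_\tau(\Lambda(\theta)\cap\Sigma_\tau)=\upmu_\tau\bigl(G^{-1}(\cB_\theta)\bigr)\le C\rho/|\theta|$.

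Summing over $\theta$, group $\Theta'_K\setminus 0$ — all of whose elements satisfy $c_1\le|\theta|\le r$ — into dyadic shells; since a ball of radius $t$ contains $\asymp t^d$ points of $\Gamma^*$,
\[
\sum_{\theta\in\Theta'_K\setminus 0}\frac{\rho}{|\theta|}\le C\rho\sum_{c_1\le 2^j\le r}\frac{(2^j)^d}{2^j}=C\rho\sum_{c_1\le 2^j\le r}(2^j)^{d-1}\le C\rho\,r^{d-1},
\]
the last step using $d\ge2$ (the geometric series is dominated by its top term, $\asymp r^{d-1}$); this gives $\upmu_\tau(\Lambda\cap\Sigma_\tau)\le C_0r^{d-1}\rho$. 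For the second bound (and $\varsigma$ below a fixed constant so that the shell stays in $U$) I foliate $\{|A^0(\xi)-\tau|\le\varsigma\}$ by the level sets $\Sigma_s$, $s\in[\tau-\varsigma,\tau+\varsigma]$; the coarea formula applied to $A^0$ gives, with the natural measures $\upmu_s=d\xi:dA^0$, that $\mes\bigl(\Lambda(\theta)\cap\{|A^0-\tau|\le\varsigma\}\bigr)=\int_{\tau-\varsigma}^{\tau+\varsigma}\upmu_s\bigl(\Lambda(\theta)\cap\Sigma_s\bigr)\,ds$, and the previous paragraph applies verbatim to each $\Sigma_s$ with uniform constants, so this is $\le C\rho\varsigma/|\theta|$; summing over $\theta$ as above yields $\le C_0r^{d-1}\rho\varsigma$.

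The band estimate and the lattice point count are routine; the one place needing care is the global geometry of the Gauss map — deducing from the pointwise, purely tangential condition \textup{(\ref{eqn-1.13})} that $G$ is a bounded-distortion covering of $\bS^{d-1}$ with a bounded number of sheets — together with checking that all constants are uniform over the nearby level sets $\Sigma_s$ used in the Euclidean estimate. The hypothesis $d\ge2$ enters precisely in the domination of the dyadic sum $\sum(2^j)^{d-1}\asymp r^{d-1}$; for $d=1$ it would instead produce a logarithmic factor, consistent with the one-dimensional picture in the introduction.
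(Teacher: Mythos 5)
Your argument is correct and is essentially the paper's own (very terse) proof written out in full: bound the spherical band $\{|\langle m,\theta\rangle|\lesssim\rho\}$ by $C\rho/|\theta|$, pull it back via the Gauss map (strong convexity makes this a bounded-sheet covering of $\bS^{d-1}$, a point you state more carefully than the paper, which just says ``diffeomorphism''), and sum dyadically over $\Theta'_K\cap\B(0,r)$ to produce the $r^{d-1}$ factor. For the $\varsigma$-shell you use the coarea foliation into level sets $\Sigma_s$ while the paper invokes the single map $\xi\mapsto(\nabla A^0(\xi)/|\nabla A^0(\xi)|,A^0(\xi))$ being a diffeomorphism onto its image in $\bR^d$; these are the same computation organized slightly differently.
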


Indeed, $(d-1)$-dimensional measure of $\{x\colon |x|=1, \ |\langle x,\theta\rangle|\le \rho \}$ does not exceed $C_0|\theta|^{-1}\rho $ and
and due to microhyperbolicity and strong convexity assumptions maps $\Sigma_\tau \to \nabla A^0(\xi)/|\nabla A^0(\xi)|\in \bS^{d-1}$ and
\begin{equation*}
\{ \xi\colon |A^0(\xi) - \tau|\le \varsigma\}\to (\nabla A^0(\xi)/|\nabla A^0(\xi)|, A^0(\xi))\in \bR^d
\end{equation*}
 are diffeomorphisms.

Furthermore, according to Proposition \ref{IDS-prop-2.7} of \cite{IvrIDS} that on the non-resonant set 
one can ``almost'' diagonalize $A(x,hD)$. More precisely  

\begin{proposition}\label{prop-2.1}
Let assumptions \textup{(\ref{eqn-1.12})} and \textup{(\ref{eqn-1.13})} be fulfilled.
\begin{enumerate}[label=(\roman*), wide, labelindent=0pt]
\item\label{prop-2.1-i}
Then there exists a periodic pseudodifferential operator $P=P(x,hD)$ such that
\begin{gather}
\bigl(e^{-i\varepsilon h^{-1}P} A e^{i\varepsilon h^{-1}P} -\cA\bigr) Q \equiv 0
\label{eqn-2.6} \\
\shortintertext{with}
\cA = A^0(hD) + \varepsilon B'(hD) + \varepsilon B'' (x,hD)
\label{eqn-2.7}
\end{gather}
modulo operator from $\sH^m$ to $\sL^2$ with the operator norm $O(h^{M})$ with $M$ arbitrarily large and 
$K=K(M,d,\delta)$ in the definition of non-resonant point provided $Q=Q(hD)$ has a symbol, supported in 
$\{\xi\colon |A^0(\xi)-\tau|\le 2\varepsilon h^{-\delta}\}$.

Here $P(x,hD)$, $B'(hD)$ and $B'' (x,hD)$ are operator with Weyl symbols 
of the same form \textup{(\ref{eqn-1.10})} albeit such that
\begin{gather}
|D_\xi ^\alpha D_x^\beta P |\le c_{\alpha\beta} \rho^{-1-|\alpha|}
\qquad \forall \alpha, \beta,\label{eqn-2.8}\\
|D_\xi ^\alpha D_x^\beta B'' |\le c'_{\alpha\beta} \rho^{-|\alpha|}
\qquad \forall \alpha, \beta,
\label{eqn-2.9}
\end{gather}
and symbol of $B'$ also satisfies \textup{(\ref{eqn-2.9})}.

\item\label{prop-2.1-ii}
Further,
\begin{equation}
\xi\notin \Lambda(\theta) \implies b''_\theta (\xi)=0.
\label{eqn-2.10}
\end{equation}
and $B'(\xi)$ coincides with $b_0(\xi)$ modulo  $O(\varepsilon \rho^{-2})$.
\end{enumerate}
\end{proposition}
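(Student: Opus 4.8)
The proof repeats, with the obvious changes, the pseudodifferential gauge-transform (KAM-type) construction of Section~\ref{IDS-sect-2} and Proposition~\ref{IDS-prop-2.7} of \cite{IvrIDS}: the only difference is that $\Theta=\Gamma^*$ is now a genuine non-degenerate lattice (rather than the pseudo-lattice of that paper), so that conditions \ref{IDS-cond-A}--\ref{IDS-cond-E} hold for $\Theta'=\Theta\cap\B(0,\omega)$ and $\Theta'_K=\Theta\cap\B(0,K\omega)$, and the argument of \cite{IvrIDS} applies essentially verbatim. What follows is a description of that argument in the present notation.

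\emph{The homological equation (one step).} Put $T=\varepsilon h^{-1}P$ with $P=P(x,hD)=\sum_{\theta\in\Theta'}p_\theta(\xi)e^{i\langle\theta,x\rangle}$, all symbols localized to (a neighbourhood of) $\Omega_\tau$, and expand
\[
e^{-iT}Ae^{iT}=A+i[A,T]-\tfrac12[[A,T],T]+\cdots .
\]
Since $A^0$ is $x$-independent, the leading ($h$-independent) part of the Weyl symbol of $i[A^0(hD),T]=i\varepsilon h^{-1}[A^0(hD),P]$ equals $\varepsilon\{A^0,P\}=i\varepsilon\sum_\theta\langle\nabla_\xi A^0(\xi),\theta\rangle\,p_\theta(\xi)e^{i\langle\theta,x\rangle}$; choosing
\[
p_\theta(\xi)=\frac{i\,b_\theta(\xi)\,\psi_\theta(\xi)}{\langle\nabla_\xi A^0(\xi),\theta\rangle}\qquad(\theta\ne0),
\]
with $\psi_\theta$ supported where $|\langle\nabla_\xi A^0(\xi),\theta\rangle|\ge\tfrac12\rho$ and equal to $1$ where $\xi\notin\Lambda(\theta)$, cancels the $O(\varepsilon)$ off-diagonal part of $\varepsilon B$ on the non-resonant part of $\Omega_\tau$. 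On $\{|\langle\nabla_\xi A^0,\theta\rangle|\ge\tfrac12\rho\}$ every $\xi$-differentiation of $p_\theta$ costs a factor $\rho^{-1}$ (it hits the denominator or $\psi_\theta$), while the rapid decay \textup{(\ref{eqn-1.11})} of $b_\theta$ in $|\theta|$ makes the Fourier series and all of its $x$-derivatives absolutely convergent; together these give \textup{(\ref{eqn-2.8})}. The modes that are \emph{not} removed --- precisely those with $\xi\in\Lambda(\theta)$ --- are collected into $\varepsilon B''(x,hD)$, which gives \textup{(\ref{eqn-2.10})} and, via the $h$-pseudodifferential calculus, the bound \textup{(\ref{eqn-2.9})}; the $\theta=0$ mode, together with the diagonal parts of $i[\varepsilon B,T]$ and $-\tfrac12[[A^0,T],T]$, produces $\varepsilon B'(hD)$. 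The correction of $B'$ to $b_0$ is the zero Fourier mode of a Poisson bracket of two non-constant modes $e^{i\langle\theta,x\rangle}$, hence carries one $\xi$-derivative of some $p_{\theta}$ and is $O(\varepsilon\rho^{-2})$ relative to $b_0$; this is the second half of part~\ref{prop-2.1-ii}.

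\emph{Iteration.} Seek $P\sim\sum_{j\ge1}P_j$ determined recursively: given $P_1,\dots,P_{k-1}$ as above, the conjugated operator has the form $A^0(hD)+\varepsilon B'_{k-1}(hD)+\varepsilon B''_{k-1}(x,hD)+\varepsilon R_{k-1}$ with $R_{k-1}$ having Fourier support in $\Theta'_k$ and, by the commutator expansion together with \textup{(\ref{eqn-2.8})}--\textup{(\ref{eqn-2.9})}, of size $O((\varepsilon\rho^{-2})^{k-1})$ in the symbol class $\{\,|D^\alpha_\xi D^\beta_x\cdot|\le c_{\alpha\beta}\rho^{-|\alpha|}\,\}$; recall $\rho\ge\varepsilon^{1/2}h^{-\delta}$, so $\varepsilon\rho^{-2}\le h^{2\delta}\ll1$. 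Take $P_k=O((\varepsilon\rho^{-2})^{k-1}\rho^{-1})$ solving the homological equation for the off-diagonal part of $\varepsilon R_{k-1}$, stop at $k=N$ with $N=N(M,\delta)$ so large that $\varepsilon(\varepsilon\rho^{-2})^N\le h^M$, and choose $K=K(M,d,\delta)\ge N+1$ so that $\Theta'_K\supset\Theta'_N$ contains all frequencies produced along the way. Then $P\approx P_1$ satisfies \textup{(\ref{eqn-2.8})}, and $\varepsilon R_N$ has symbol $O(h^M)$ on $\supp Q$, hence operator norm $O(h^M)$ from $\sH^m$ to $\sL^2$ once multiplied by $Q=Q(hD)$ supported in $\{|A^0(\xi)-\tau|\le 2\varepsilon h^{-\delta}\}$; this $Q$ on the right also absorbs the cut-offs $\psi_\theta$ and the region where the construction was not performed, giving \textup{(\ref{eqn-2.6})}--\textup{(\ref{eqn-2.7})}, with $B'$ the accumulated diagonal part, still $=b_0+O(\varepsilon\rho^{-2})$.

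\emph{Main obstacle.} The only genuine work is the bookkeeping of the symbol classes through the iteration, carried out in \cite{IvrIDS}: one must check that every $\xi$-differentiation costs exactly $\rho^{-1}$ (which is why $\rho$, and the cut-offs $\psi_\theta$, live at scale $\rho$, and why the rapid $\theta$-decay of $b_\theta$ is used to kill all factors of $\omega=h^{-\varkappa}$), that the Fourier support stays inside $\B(0,K\omega)$ with $K$ depending only on $M,d,\delta$, and that quantizing the remainder in the $h$-pseudodifferential calculus loses no power of $h$, so that the $\sH^m\to\sL^2$ norm is truly $O(h^M)$. With $\Theta=\Gamma^*$ a genuine lattice all of this is, if anything, easier than in the pseudo-lattice setting of \cite{IvrIDS}.
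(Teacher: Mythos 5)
Your proposal follows exactly the route the paper takes: the paper gives no separate argument but instead invokes Sections~\ref{IDS-sect-1}--\ref{IDS-sect-2} and Proposition~\ref{IDS-prop-2.7} of \cite{IvrIDS}, noting only that the non-degenerate lattice $\Theta=\Gamma^*$ is a special case of the pseudo-lattice framework there, and you do the same. The sketch you append of the homological equation, the cut-offs $\psi_\theta$ at scale $\rho$, the symbol-class bookkeeping with small parameter $\varepsilon\rho^{-2}\le h^{2\delta}$, the iteration to order $N(M,\delta)$, and the choice of $K$ is a correct description of the construction in \cite{IvrIDS} and is consistent with \textup{(\ref{eqn-2.8})}--\textup{(\ref{eqn-2.10})}.
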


In what follows
\begin{equation}
\cA^0 (hD)\coloneqq A^0(hD)+ \varepsilon B'(hD)\qquad\text{and}\qquad  \cB\coloneqq  B''(x,hD).
\label{eqn-2.11}
\end{equation}

\begin{remark}\label{rem-2.2}
\begin{enumerate}[label=(\roman*), wide, labelindent=0pt]
\item\label{rem-2.2-i}
Eigenvalues of $\cA^0$ are
\begin{equation}
\lambda_\gamma (\upxi)=\cA^0 (h(\gamma+\upxi)).
\label{eqn-2.12}
\end{equation}

\item\label{rem-2.2-ii}
If $\xi = h(\gamma+\upxi)$ is non-resonant, then due to (\ref{eqn-2.10}) in $\epsilon'\rho$-vicinity of $\xi$ this $\lambda_\gamma (\upxi)$ is also an eigenvalue of $\cA$.

\item\label{rem-2.2-iii}
Without any loss of the generality one can assume that 
\begin{equation}
|\theta| \ge \varepsilon h^{-\delta}\implies b''_\theta=0.
\label{eqn-2.13}
\end{equation}
We assume that this is the case.

\item\label{rem-2.2-iv}
Let us replace operator $\cA$ defined by (\ref{eqn-2.7}) by operator
\begin{equation}
\cA' = \cA^0(hD) +  \varepsilon \cB'(x,hD),\qquad \cB'(x,hD)=T (hD)\cB T(hD)
\label{eqn-2.14}
\end{equation}
with $T(hD)$ operator with symbol $T(\xi)$ which is a characteristic function of 
$\Omega _\tau$ defined by (\ref{eqn-2.1}) with $C=6$. Then (\ref{eqn-2.6}) holds.

From now on $\cA\coloneqq \cA'$ and $\cB\coloneqq \cB'$.
\end{enumerate}
\end{remark}

It would be sufficient to prove Theorem~\ref{thm-1.3} for operator $\cA$. Indeed,

\begin{proposition}\label{prop-2.3}
\begin{enumerate}[label=(\roman*), wide, labelindent=0pt]
\item\label{prop-2.3-i}
For each point $\lambda \in \Spec (A(\upxi))\cap \{|\lambda-\tau|\le  \varepsilon h^{-\delta}\}$ \ 
 $\dist (\lambda , \Spec (\cA (\upxi))\le Ch^M$.
 
\item\label{prop-2.3-ii}
Conversely, for each point $\lambda \in \Spec (\cA (\upxi))\cap \{|\lambda-\tau|\le  \varepsilon h^{-\delta}\}$ \\ 
 $\dist (\lambda , \Spec (A(\upxi))\le Ch^M$. 

\item\label{prop-2.3-iii}
Furthermore, if $\lambda \in \Spec (A(\upxi))\cap \{|\lambda-\tau|\le  \varepsilon h^{-\delta}\}$ is
a simple eigenvalue separated from the rest of  $\Spec (A(\upxi))$ by a distance at least $2h^{M-1}$, then
there exists   $\lambda '\in \Spec (\cA (\upxi))\cap \{|\lambda'-\lambda |\le  Ch^M\}$
separated from the rest of $\Spec (\cA (\upxi))$ by a distance at least $h^{M-1}$.

\item\label{prop-2.3-iv}
Conversely, if $\lambda '\in \Spec (\cA (\upxi))\cap \{|\lambda-\tau|\le  \varepsilon h^{-\delta}\}$ is
a simple eigenvalue separated from the rest of $\Spec (\cA (\upxi))$  by a distance at least $2h^{M-1}$, then
there exists   $\lambda \in \Spec (A(\upxi))\cap \{|\lambda'-\lambda |\le  Ch^M\}$
separated from the rest of $\Spec (A(\upxi))$  by a distance at least $h^{M-1}$.
\end{enumerate}
\end{proposition}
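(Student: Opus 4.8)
The plan is to prove all four parts of Proposition~\ref{prop-2.3} as consequences of the approximate unitary equivalence established in Proposition~\ref{prop-2.1}. Recall that $\cA$ (in its final form $\cA'$ from Remark~\ref{rem-2.2}\ref{rem-2.2-iv}) differs from $U^{-1}AU$, where $U=e^{i\varepsilon h^{-1}P}$ is unitary, by an operator whose norm from $\sH^m$ to $\sL^2$ is $O(h^M)$ \emph{after composition with the cutoff $Q=Q(hD)$} supported near $\Sigma_\tau$. So the first thing I would do is record the elementary localization fact: if $\lambda\in\Spec(A(\upxi))$ with $|\lambda-\tau|\le\varepsilon h^{-\delta}$, then any associated (normalized) eigenfunction $u$ satisfies $\|(1-Q)u\|=O(h^M)$, i.e.\ $u$ is, up to $O(h^M)$, microlocalized where $Q\equiv 1$. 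This follows from ellipticity of $A^0-\tau$ off $\Omega_\tau$ together with $(A^0(hD)-\tau)u = (\lambda-\tau)u - \varepsilon B u$ and the symbol bounds \textup{(\ref{eqn-1.1})}; the same holds on the $\cA$ side since $\cA^0$ has the same principal part. This is the technical backbone and, I expect, the only place requiring genuine care.

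For part~\ref{prop-2.3-i}: given an eigenfunction $Au=\lambda u$, $\|u\|=1$, with $|\lambda-\tau|\le\varepsilon h^{-\delta}$, set $v=U^{-1}u$, so $\|v\|=1$. Then $(\cA-\lambda)v = (\cA - U^{-1}AU)v$, and I would insert the cutoff: since $u=Qu+O(h^M)$ we also control $v=QU^{-1}\cdots$ up to the right order (commuting $U$ past $Q$ costs only $O(\varepsilon h^{-1}\cdot h^{1})=O(\varepsilon)$ per commutator on symbols with the stated bounds — one should track this, but it only widens the support slightly, which is absorbed by taking the support of $Q$ to be $\{|A^0-\tau|\le 2\varepsilon h^{-\delta}\}$ as in Proposition~\ref{prop-2.1}). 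Hence $\|(\cA-\lambda)v\|=O(h^M)$, and since $\cA$ is self-adjoint with discrete spectrum this forces $\dist(\lambda,\Spec(\cA(\upxi)))=O(h^M)$ by the standard quasimode argument ($\|(\cA-\lambda)^{-1}\|\ge\dist(\lambda,\Spec)^{-1}$). Part~\ref{prop-2.3-ii} is symmetric, using $U$ in place of $U^{-1}$ and the analogous localization of eigenfunctions of $\cA$.

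For parts~\ref{prop-2.3-iii} and~\ref{prop-2.3-iv}, I would upgrade the quasimode estimate to a spectral-projector estimate. If $\lambda\in\Spec(A(\upxi))$ is simple with a gap of at least $2h^{M-1}$ to the rest of the spectrum, let $E=\mathbf 1_{[\lambda-h^{M-1},\lambda+h^{M-1}]}(A(\upxi))$ be the rank-one spectral projector and $E'$ the spectral projector of $\cA(\upxi)$ on the same interval. From $\|(\cA-\lambda)U^{-1}u\|=O(h^M)$ and the gap for $\cA$ (which holds because by \ref{prop-2.3-i},\ref{prop-2.3-ii} the spectra are $O(h^M)$-close, so $\cA$ has exactly one eigenvalue $\lambda'$ in $[\lambda-Ch^M,\lambda+Ch^M]$ and none within $h^{M-1}$ of it beyond that) one gets $\|E'U^{-1}u - U^{-1}u\| = O(h^M/h^{M-1}) = O(h)$, so $E'\ne 0$; counting dimensions via $\|E'-U^{-1}EU\|<1$ (again a resolvent/contour-integral estimate, since on the contour $|\zeta-\lambda|=\tfrac32 h^{M-1}$ both resolvents are $O(h^{1-M})$ and their difference is $O(h^{M}\cdot h^{2(1-M)})=O(h^{2-M})$, tiny compared to $1$ once $M$ is fixed and $h$ small — wait, this needs $M\ge 2$, which is harmless since $M$ is arbitrary) yields $\rank E'=1$, so $\lambda'$ is simple with the asserted separation. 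Part~\ref{prop-2.3-iv} is the mirror image. The one subtlety worth flagging is bookkeeping of the powers of $h$: one must choose $M$ in Proposition~\ref{prop-2.1} large enough (depending on the fixed exponent $M-1$ appearing in the statement, and on $\delta$) that all the $O(h^{M-\text{something}})$ error terms beat the $h^{M-1}$ spectral gap; this is the main obstacle, but it is a matter of ordering the quantifiers, not of new ideas.
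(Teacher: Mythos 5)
Your overall strategy --- transfer quasimodes through $U=e^{i\varepsilon h^{-1}P}$ using the cutoff $Q$ to activate the $O(h^M)$ estimate of Proposition~\ref{prop-2.1}, then compare nearby spectra for the simple-eigenvalue statements --- is the natural one, and since the paper merely declares the proof trivial, this is surely what it intends. Parts~\ref{prop-2.3-i} and~\ref{prop-2.3-ii} are handled correctly (the commutation of $U$ past $Q$ and the slight widening of the support are indeed the only delicate points, and you flagged them appropriately).

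For~\ref{prop-2.3-iii}--\ref{prop-2.3-iv} there is a real gap. You estimate $\|E'-U^{-1}EU\|$ by integrating the resolvent difference $(A-\zeta)^{-1}\bigl(U\cA U^{-1}-A\bigr)\bigl(U\cA U^{-1}-\zeta\bigr)^{-1}$ over the contour, treating $U\cA U^{-1}-A$ as if it were $O(h^M)$ in operator norm. But Proposition~\ref{prop-2.1} gives $\bigl(U^{-1}AU-\cA\bigr)Q=O(h^M)$ only \emph{after composition with the cutoff}; the resolvent $\bigl(U\cA U^{-1}-\zeta\bigr)^{-1}$ applied to a generic vector is not microlocalized in $\Omega_\tau$, so you cannot insert the $O(h^M)$ bound as written. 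One would need to split off $1-Q'$ (with $Q'=UQU^{-1}$) and dispose of that piece by ellipticity of $\cA-\zeta$ off $\Omega_\tau$, where $|A^0-\tau|\ge C\varepsilon h^{-\delta}\gg h^{M-1}$. There is also an arithmetic slip: the integrand is $O(h^{2-M})$, but the contour length contributes an extra factor $O(h^{M-1})$, so the projector difference is $O(h)$, not $O(h^{2-M})$; this is why the spurious restriction $M\ge 2$ appeared. Both issues are avoidable: rather than a contour integral, count via orthogonality of quasimodes. If $\cA(\upxi)$ had two eigenvalues within $h^{M-1}$ of $\lambda$ with orthonormal eigenvectors $v',v''$, then by the quasimode and localization estimates already established in~\ref{prop-2.3-i}--\ref{prop-2.3-ii} both $Uv'$ and $Uv''$ would be within $O(h^M/h^{M-1})=O(h)$ of the one-dimensional $\lambda$-eigenspace of $A(\upxi)$, forcing $|\langle Uv',Uv''\rangle|=1-O(h^2)$ and contradicting $\langle Uv',Uv''\rangle=0$. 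This uses nothing beyond what you already proved in the first two parts and is likely the intended trivial argument.
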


\begin{proof}
Trivial proof is left to the reader.
\end{proof}

\begin{remark}\label{rem-2.4}
One can generalize Statements \ref{prop-2.3-iii} and \ref{prop-2.3-iv} of Proposition~\ref{prop-2.3} from simple eigenvalues to subsets 
of $\Spec (A(\upxi))$ and $\Spec (\cA (\upxi))$ separated by the rest of the spectra; these subsets will contain the same number of elements.
\end{remark}

\section{Classification of resonant points}
\label{sect-2.2}

We start from the case $d=2$. Then We have only one kind of resonant points $\Xi_1=\Lambda$. If $d\ge 3$ then there are $d-2$ kinds of resonant points. First, following \cite{ParSob} consider \emph{lattice spaces} $\fV$ spanned by $n$ linearly independent elements 
$\theta_1,\ldots,\theta_n\in \Gamma^*\cap \B(0,r)$, where we take $r=K h^{-\varkappa}$. Let $\cV_n$ be the set of all such spaces.

It is known \cite{ParSob} that 

\begin{proposition}\label{prop-2.5}
For $\fV\in \cV_n$, $\fW\in \cV_m$ either $\fV\subset\fW$, or $\fW\subset\fV$ or the angle\footnote{\label{foot-10} This angle $\widehat{(\fV,\fW)}$ is defined as the smallest possible angle between two vectors $v\in \fV\ominus (\fV\cap\fW)$ and  $w\in \fW\ominus (\fV\cap\fW)$.}  between $\fV$ and $\fW$ is at least $\epsilon r^{-L}$ with $L=L(d)$ and $\epsilon=\epsilon (d, \Gamma)$.
\end{proposition}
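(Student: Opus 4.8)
The statement is purely arithmetic–geometric: it says that two lattice subspaces of $\Gamma^*$, each spanned by short vectors (length $\le r = Kh^{-\varkappa}$), are either nested or make a ``quantitatively large'' angle $\ge \epsilon r^{-L}$. The plan is to reduce to the case $\fV\cap\fW = \{0\}$ by quotienting, and then to estimate from below the angle between two rational subspaces in terms of the heights (denominators) of the vectors involved. First I would recall that $\fV\ominus(\fV\cap\fW)$ and $\fW\ominus(\fV\cap\fW)$ are again lattice subspaces — spanned by integer combinations of the $\theta_i$'s, hence by vectors of length $O(r)$ after a unimodular change of basis (LLL-type reduction of $\Gamma^*\cap\fV$ and $\Gamma^*\cap\fW$ guarantees a basis of each of size $O_{d,\Gamma}(r)$). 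So without loss of generality $\fV\cap\fW=\{0\}$ and both are spanned by vectors in $\Gamma^*$ of norm $\le C r$.

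The core estimate is then: if $\fV = \Span(v_1,\dots,v_n)$ and $\fW=\Span(w_1,\dots,w_m)$ with $v_i,w_j\in\Gamma^*$, $|v_i|,|w_j|\le Cr$, and $\fV\cap\fW=\{0\}$, then the smallest angle between $\fV$ and $\fW$ is bounded below by a negative power of $r$. To see this, fix the unit vectors $v\in\fV$, $w\in\fW$ realizing the minimal angle. The quantity $\sin^2\widehat{(\fV,\fW)}$ equals the smallest eigenvalue of a Gram-type matrix built from $v_1,\dots,v_n,w_1,\dots,w_m$; equivalently, one can express $\cos\widehat{(\fV,\fW)}$ through the Gram determinants $\det G(\fV)$, $\det G(\fW)$ and $\det G(\fV+\fW)$. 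Since all these Grams are integer matrices (entries $\langle\cdot,\cdot\rangle$ need not be integral for a general lattice, but after passing to the lattice $\Gamma^*$ in the standard basis they are algebraic integers of controlled size, or — working directly — rational with denominators controlled by $\det\Gamma^*$), the nonzero one among $1-\cos^2$ is at least $1/(\text{product of sizes})$, and each size is $O(r^{d})$ by Hadamard's inequality applied to vectors of norm $O(r)$. Hence $\sin\widehat{(\fV,\fW)}\ge \epsilon r^{-L}$ with $L = L(d)$ and $\epsilon=\epsilon(d,\Gamma)$.

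The cleanest route, and the one I would actually write, is to \emph{cite \cite{ParSob}}: this is exactly their lemma on lattice subspaces (the paper itself flags ``It is known \cite{ParSob} that''), and the only thing to check is that their hypothesis — short generating vectors — is met, which it is by construction with $r = Kh^{-\varkappa}$, and that their pseudo-lattice setting specializes to an honest non-degenerate lattice $\Gamma^*$ (only making things easier). The main obstacle, if one insists on a self-contained argument, is the bookkeeping in the non-nested reduction: one must check that projecting out $\fV\cap\fW$ does not destroy the ``spanned by short lattice vectors'' property, which requires an LLL/Minkowski reduced basis of $\Gamma^*\cap\fV$ and of $\Gamma^*\cap\fW$ and the observation that $\Gamma^*\cap(\fV\cap\fW)$ is a primitive sublattice, so the quotient lattices are again lattices with covolume bounded below in terms of $\Gamma$. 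Everything after that reduction is a one-line Gram-determinant lower bound. I expect to present this as a short remark that the proof is in \cite{ParSob}, possibly with the Gram-determinant estimate sketched for completeness.
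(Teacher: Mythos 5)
Your bottom line is exactly right and matches the paper: the text around the Proposition reads ``It is known \cite{ParSob} that'' and gives no proof, so citing \cite{ParSob} \emph{is} the paper's approach. You correctly observe that their pseudo-lattice setting only becomes easier when $\Theta=\Gamma^*$ is an honest non-degenerate lattice, and that the short-generator hypothesis is met by the definition of $\cV_n$ with $r=Kh^{-\varkappa}$.

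One inaccuracy in your sketch is worth flagging if you ever do spell it out. You assert that $\fV\ominus(\fV\cap\fW)$ and $\fW\ominus(\fV\cap\fW)$ are again lattice subspaces. This is false in general: the orthogonal complement (even taken inside $\fV$) of a lattice subspace need not be spanned by lattice vectors. A simple counterexample: $\Gamma^*=\bZ(1,0)+\bZ(\alpha,1)$ with $\alpha$ irrational, $\fV=\bR^2$, $\fV\cap\fW=\bR(1,0)$; then $\fV\ominus(\fV\cap\fW)=\bR(0,1)$, which contains no nonzero lattice vector. So the reduction ``WLOG $\fV\cap\fW=\{0\}$'' cannot be made literally by replacing $\fV,\fW$ with their orthogonal quotients and staying inside the lattice world. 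The fix is standard and preserves the spirit of your argument: choose a reduced (Minkowski/LLL) basis $u_1,\dots,u_k$ of $\Gamma^*\cap(\fV\cap\fW)$, extend it to reduced bases $u_1,\dots,u_k,v_{k+1},\dots,v_n$ of $\Gamma^*\cap\fV$ and $u_1,\dots,u_k,w_{k+1},\dots,w_m$ of $\Gamma^*\cap\fW$, all of length $O(r)$, and then express
\begin{equation*}
\prod_i \sin^2\theta_i
\;=\;
\frac{\det G(u,v,w)\,\det G(u)}{\det G(u,v)\,\det G(u,w)},
\end{equation*}
where the $\theta_i$ are the principal angles between $\fV\ominus(\fV\cap\fW)$ and $\fW\ominus(\fV\cap\fW)$. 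The numerator is bounded below by a constant depending only on $d$ and $\Gamma$ (the Gram determinant of any set of linearly independent vectors of $\Gamma^*$ is bounded below by the squared covolume of the primitive sublattice they sit in, hence by $\epsilon(\Gamma)$), and the denominator is $\le (Cr)^{2(n+m)}$ by Hadamard, giving $\sin\widehat{(\fV,\fW)}\ge\epsilon r^{-L}$ with $L=L(d)$. Your Gram-determinant idea is therefore sound; only the ``quotienting'' phrasing needs to be replaced by this choice-of-basis bookkeeping. Also, the integrality digression is a red herring: one does not need the Gram entries to be integral or controlled rationals, only the lower bound on the full Gram determinant, which comes from the lattice covolume.
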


Fix $0<\delta_1<\ldots <\delta_n$ arbitrarily small and for $\fV\in \cV_n$ let us introduce  
\begin{equation}
\Lambda (\fV,\rho_n )\coloneqq
\{\xi \in \Omega_\tau\colon |\langle \nabla _\xi A^0(\xi),\theta\rangle|\le \rho_n|\theta| \ \  \forall \theta\in \cV\}
\label{eqn-2.15}
\end{equation}
with  $\rho_n=\varepsilon^{\frac{1}{2}}h^{-\delta_n}$. 

We define $\Xi_n$ by induction. First, $\Xi_d=\emptyset$. Assume that we defined $\Xi_{d},\ldots, \Xi_{n+1}$. Then we define 
\begin{equation}
\Xi_n \coloneqq \bigcup_{\fV\in \cV_n,\ \xi \in \Lambda(\fV)\cap \Omega_\tau} (\xi +\fV)\cap\Omega_\tau.
\label{eqn-2.16}
\end{equation}

It follows from Proposition~\ref{prop-2.5} that 
\begin{proposition}\label{prop-2.6}
Let $\varkappa>0$ in the definition of $\Theta'$ and $\delta>0$ in the definition of $\Omega_\tau$ be sufficiently small\,\footnote{\label{foot-11} They depend on $\vartheta$ and $\delta_1,\ldots,\delta_n$.}. Then for sufficiently small $h$

\begin{enumerate}[label=(\roman*), wide, labelindent=0pt]
\item\label{prop-2.6-i}
$\Xi_n \subset \bigcup_{\fV\in \cV} \Lambda (\fV, 2\rho_n)$.

\item\label{prop-2.6-ii}
If $\xi\notin \Xi_{n+1}$ and $\xi \in \xi'+\fV$, $\xi \in \xi''+\fW$ for
$\xi' \in \Lambda (\fV)$,  $\xi''\in \Lambda (\fW)$ with $\fV,\fW\in \cV_n$, then $\fV=\fW$.
\end{enumerate}
\end{proposition}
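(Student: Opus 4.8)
The plan is to prove (i) first and then derive (ii) from it together with Proposition~\ref{prop-2.5}. Throughout one uses the hierarchy of small parameters $0<\varkappa\ll\delta<\delta_1<\dots<\delta_{d-1}$, with $\varkappa$ chosen last — small enough that $h^{-C\varkappa}$ is negligible against $h^{-(\delta_{n+1}-\delta_n)}$ for all $n$ (this is the dependence flagged after the statement) — together with the elementary consequences of (\ref{eqn-2.2}): $\varepsilon^{1/2}h^{-\delta/2}\le\rho_n$ and $\varepsilon h^{-\delta}\le\rho_n$ for every $n$. For (i): let $\zeta\in\Xi_n$; by (\ref{eqn-2.16}) there are $\fV\in\cV_n$ and $\xi\in\Lambda(\fV)\cap\Omega_\tau$ with $\zeta\in(\xi+\fV)\cap\Omega_\tau$, and it suffices to show $\zeta\in\Lambda(\fV,2\rho_n)$. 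Since $\xi\in\Lambda(\fV,\rho_n)$, every unit $e\in\cV$ satisfies $|\langle\nabla A^0(\xi),e\rangle|\le\rho_n$; microhyperbolicity (\ref{eqn-1.12}) gives $|\nabla A^0|\ge\tfrac12\epsilon_0$ on $\Omega_\tau$ (for small $h$, $\Omega_\tau$ lies in a fixed neighbourhood of $\Sigma_\tau$), so each such $e$ makes angle $O(\rho_n)$ with the tangent plane of $\Sigma_\tau$, and then strong convexity (\ref{eqn-1.13}) plus continuity give $\langle\Hess A^0(\eta)e,e\rangle\ge\tfrac12\epsilon_0$ for all $\eta$ in that neighbourhood and all unit $e\in\cV$. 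Hence $A^0$ restricted to any segment of $\xi+\fV$ is strictly convex with second derivative $\ge\tfrac12\epsilon_0$, and on $S\coloneqq(\xi+\fV)\cap\Omega_\tau$ it is confined to a window of width $\asymp\varepsilon h^{-\delta}$; a strictly convex function stays in a window of width $w$ only on a set of diameter $O(\sqrt{w/\epsilon_0})$, so $\diam S\le C(\varepsilon^{1/2}h^{-\delta/2}+\rho_n)\le C\rho_n$ (the second term accounts for the $O(\rho_n)$ tilt of $\xi+\fV$ from exact tangency). Consequently, for every $\theta\in\cV$,
\begin{equation*}
|\langle\nabla A^0(\zeta),\theta\rangle|\le|\langle\nabla A^0(\xi),\theta\rangle|+\|\Hess A^0\|_\infty\,|\zeta-\xi|\,|\theta|\le\rho_n|\theta|+C'\rho_n|\theta|,
\end{equation*}
and the constant is absorbed into the normalization of $\rho_n$ (the scales $\delta_j$ being at our disposal), giving $\zeta\in\Lambda(\fV,2\rho_n)$ and proving (i).

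For (ii), suppose $\xi\in\Omega_\tau$, $\xi\notin\Xi_{n+1}$, and $\xi\in(\xi'+\fV)\cap(\xi''+\fW)$ with $\xi'\in\Lambda(\fV)$, $\xi''\in\Lambda(\fW)$, $\fV,\fW\in\cV_n$; then $\xi\in\Xi_n$ via either slice, so (i) yields $|\langle\nabla A^0(\xi),\theta\rangle|\le2\rho_n|\theta|$ for all $\theta\in\cV\cup\cW$. Assume $\fV\ne\fW$. Since $\dim\fV=\dim\fW=n$, neither is contained in the other, so among the lattice generators of $\fW$ (elements of $\Gamma^*\cap\B(0,r)$, $r=Kh^{-\varkappa}$) there is one, call it $w$, with $w\notin\cV$; let $\fU$ be the lattice space spanned by the generators of $\fV$ together with $w$, so $\dim\fU=n+1$, $\fU\in\cV_{n+1}$ and $\cU=\cV+\bR w$. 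By Proposition~\ref{prop-2.5} applied to $\fV\in\cV_n$ and $\bR w\in\cV_1$ (neither contained in the other) the angle between them is $\ge\epsilon r^{-L}$, hence $\dist(w/|w|,\cV)\ge\tfrac12\epsilon r^{-L}$; writing any $\theta\in\cU$ as $\theta=v+cw$ with $v\in\cV$ one gets $|v|+|c|\,|w|\le C\epsilon^{-1}r^{L}|\theta|$, so
\begin{equation*}
|\langle\nabla A^0(\xi),\theta\rangle|\le2\rho_n\bigl(|v|+|c|\,|w|\bigr)\le C\epsilon^{-1}r^{L}\rho_n\,|\theta|\le\rho_{n+1}|\theta|,
\end{equation*}
the last step since $r^{L}\rho_n=K^{L}\varepsilon^{1/2}h^{-(\delta_n+L\varkappa)}$ and $\varkappa$ was chosen so small that $\delta_n+L\varkappa<\delta_{n+1}$. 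Thus $\xi\in\Lambda(\fU,\rho_{n+1})\cap\Omega_\tau$, whence $\xi\in(\xi+\fU)\cap\Omega_\tau\subseteq\Xi_{n+1}$ by (\ref{eqn-2.16}) — a contradiction; therefore $\fV=\fW$. (In the boundary case $n=d-1$ one has $\fU=\bR^d$, so the displayed bound reads $|\nabla A^0(\xi)|\le C\epsilon^{-1}r^{L}\rho_{d-1}$, which is $<\epsilon_0$ for small $h$, contradicting (\ref{eqn-1.12}) directly.)

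The crux is the slice-width estimate in (i): one must combine microhyperbolicity and strong convexity to see that a near-tangent affine subspace meets the thin shell $\Omega_\tau$ only in a set of diameter $O(\rho_n)$ — in particular that it cannot re-enter $\Omega_\tau$ far away — and to get the exponents right so that $\varepsilon^{1/2}h^{-\delta/2}\le\rho_n$ and the shift of $\nabla A^0$ along the slice is below the resolution of (\ref{eqn-2.15}). Everything else is the linear algebra of (\ref{eqn-2.15}) plus careful bookkeeping of the hierarchy $\varkappa\ll\delta\ll\delta_1<\dots<\delta_{d-1}$, which makes the polynomial-in-$r$ losses inherited from Proposition~\ref{prop-2.5} harmless.
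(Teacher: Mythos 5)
The paper offers no proof of Proposition~\ref{prop-2.6} — it simply says ``It follows from Proposition~\ref{prop-2.5}'' — so there is nothing to compare against line by line; one can only judge whether your reconstruction is sound and consistent with the paper's apparent intent. Your proof of part~\ref{prop-2.6-ii} is correct and uses exactly the ingredient the paper signals: given two competing spaces $\fV\ne\fW$ one adjoins a generator of $\fW$ outside $\cV$ to form $\fU\in\cV_{n+1}$, and the angle bound $\ge\epsilon r^{-L}$ of Proposition~\ref{prop-2.5} turns the two $2\rho_n$ resonances into a single $\rho_{n+1}$ resonance for $\fU$, which contradicts $\xi\notin\Xi_{n+1}$; the choice ``$\varkappa$ small depending on $\delta_1,\ldots,\delta_n$'' is precisely what absorbs the $r^L$ loss, matching footnote~\ref{foot-11}, and your remark about $n=d-1$ correctly reduces to microhyperbolicity.

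In part~\ref{prop-2.6-i} the convexity reduction is the right idea, but the central claim — that $(\xi+\fV)\cap\Omega_\tau$ has diameter $O(\rho_n)$, ``in particular that it cannot re-enter $\Omega_\tau$ far away'' — is not fully justified by the argument you give. Strong convexity~(\ref{eqn-1.13}) controls $\Hess A^0$ only on directions tangent to $\Sigma_\tau$ at the point in question. Your vectors $e\in\cV$ are near-tangent to $\Sigma_\tau$ at $\xi$, so $A^0|_{\xi+\fV}$ is uniformly convex near $\xi$ and the local piece of the slice inside $\Omega_\tau$ indeed has width $O(\varepsilon^{1/2}h^{-\delta/2}+\rho_n)\le C\rho_n$. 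But when the slice travels a macroscopic distance and hits another connected component of $\Sigma_\tau$ (the paper explicitly allows $p>1$ components in Remark~\ref{rem-1.2}\ref{rem-1.2-ii}), the directions in $\cV$ need not be near-tangent there, the restricted Hessian need not keep its sign, and the affine subspace can cross that sheet transversally. At such a re-entry point $\zeta\in\Omega_\tau$ one has $\zeta\in\Xi_n$ by definition~(\ref{eqn-2.16}), yet $\nabla A^0(\zeta)$ is generically far from orthogonal to any short lattice space, so $\zeta\notin\bigcup_\fV\Lambda(\fV,2\rho_n)$. Thus your ``strictly convex function stays in a window only on a small set'' step is valid only for the local branch, and it does not by itself rule out the far branch. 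To close this gap one either needs $\Sigma_\tau$ connected, or one must read~(\ref{eqn-2.16}) as taking only the connected component of $(\xi+\fV)\cap\Omega_\tau$ containing $\xi$ (which the later use of $\fX(\xi)$ and the bound~(\ref{eqn-2.20}) strongly suggest is intended), or one must restrict the slice to a ball of radius $O(\rho_{d-1})$ around $\xi$. Since the paper states no proof and does not make this explicit, the gap is arguably in the paper's formulation rather than in your reasoning, but your write-up should flag the needed local interpretation of~(\ref{eqn-2.16}) instead of asserting the global diameter bound.

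One further small point: your final estimate in part~\ref{prop-2.6-i} yields $|\langle\nabla A^0(\zeta),\theta\rangle|\le(1+C')\rho_n|\theta|$ with an absolute constant $C'$ coming from $\|\Hess A^0\|$; saying the constant is ``absorbed into the normalization of $\rho_n$'' is a little glib — either replace the $\rho_n$ inside~(\ref{eqn-2.16}) by $\rho_n/(1+C')$, or accept a constant $C\rho_n$ in place of $2\rho_n$ in the conclusion (which is all that is used downstream).
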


\begin{corollary}\label{cor-2.7}
Let $\varkappa>0$ in the definition of $\Theta'$ and $\delta>0$ in the definition of $\Omega_\tau$ be sufficiently small\,\footref{foot-11}.
Let $h$ be sufficiently small.

Then for each $\xi\in \Xi_n\setminus\Xi_{n+1}$ is defined just one $\fV=\fV(\xi)$ such that 
$\xi \in \xi'+\fV$ for some $\xi'\in \Lambda (\fV)$.
\end{corollary}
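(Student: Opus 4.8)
The plan is to read the corollary off directly from the defining formula \textup{(\ref{eqn-2.16})} for $\Xi_n$ together with Proposition~\ref{prop-2.6}\ref{prop-2.6-ii}; no new analysis is required beyond what is already assumed. First, for \emph{existence}: recall that $\Xi_n$ is built by the downward induction $\Xi_d=\emptyset,\ \Xi_d\supseteq\cdots\supseteq\Xi_{n+1}$, and that by \textup{(\ref{eqn-2.16})} every $\xi\in\Xi_n$ lies in $(\xi'+\fV)\cap\Omega_\tau$ for at least one pair $(\fV,\xi')$ with $\fV\in\cV_n$ and $\xi'\in\Lambda(\fV)\cap\Omega_\tau$. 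This already exhibits a lattice space $\fV=\fV(\xi)$ with the asserted property, and this part needs no smallness of $\varkappa$, $\delta$, or $h$.

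Second, for \emph{uniqueness}: suppose $\xi\in\Xi_n\setminus\Xi_{n+1}$ and that both $\xi\in\xi'+\fV$ with $\xi'\in\Lambda(\fV)$ and $\xi\in\xi''+\fW$ with $\xi''\in\Lambda(\fW)$, where $\fV,\fW\in\cV_n$. Since $\xi\notin\Xi_{n+1}$, the hypotheses of Proposition~\ref{prop-2.6}\ref{prop-2.6-ii} are exactly satisfied, so $\fV=\fW$. Hence the space produced in the existence step is the only one, and one may unambiguously write $\fV(\xi)$ for it. That completes the argument; I would present it in two short lines.

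The main obstacle — the a priori possibility that a point carries two genuinely different near-resonance directions in $\cV_n$ — is precisely what has been disposed of upstream in Proposition~\ref{prop-2.6}\ref{prop-2.6-ii}: two transversal spaces $\fV,\fW\in\cV_n$ through $\xi$ would, by the angle separation of lattice spaces (Proposition~\ref{prop-2.5}) and the smallness of $\varkappa,\delta$ relative to $\vartheta,\delta_1,\dots,\delta_n$, force $\xi$ into the lower stratum $\Xi_{n+1}$, contradicting $\xi\notin\Xi_{n+1}$. So once Proposition~\ref{prop-2.6} is in hand the corollary is immediate, and there is no residual difficulty to resolve at this stage.
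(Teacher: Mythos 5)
Your proof is correct and matches the paper's intent exactly: the paper gives no explicit proof for Corollary~\ref{cor-2.7}, presenting it as an immediate consequence of Proposition~\ref{prop-2.6}, and your two-step argument---existence directly from the defining union \textup{(\ref{eqn-2.16})}, uniqueness directly from Proposition~\ref{prop-2.6}\ref{prop-2.6-ii} applied to $\xi\notin\Xi_{n+1}$---is precisely the intended reasoning.
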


\vglue-10pt
\begin{claim} \label{eqn-2.17}
We slightly change definition of $\Xi_n$: $\xi =h(\gamma+\upxi)\in \Xi_{n,\new}$ iff $h\gamma\in \Xi_{n}$. 
From now on $\Xi_n\coloneqq \Xi_{n,\new}$.
\end{claim}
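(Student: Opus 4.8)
The statement to be proved is \eqref{eqn-2.17}, which is merely a cosmetic re-indexing: we re-baptise $\Xi_n$ so that a point $\xi=h(\gamma+\upxi)$ belongs to $\Xi_{n,\new}$ precisely when the \emph{lattice point} $h\gamma$ (the ``base point'' of $\xi$, obtained by dropping the quasimomentum contribution $h\upxi$) lies in the old $\Xi_n$. Since this is a definition accompanied by the convention ``from now on $\Xi_n\coloneqq\Xi_{n,\new}$,'' there is strictly speaking nothing to prove; what one should do instead is verify that the structural facts already established for the old $\Xi_n$ — namely Propositions~\ref{prop-2.5}, \ref{prop-2.6} and Corollary~\ref{cor-2.7} — survive the relabelling, so that the notation may be used downstream without comment.

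The plan is as follows. First I would note that the map $\xi\mapsto h\gamma$ is, on the relevant region, a translation by $-h\upxi$ with $\upxi\in\cO^*$ fixed (once $\upxi$ is fixed, as it will be in Section~\ref{sect-3}), so $\Xi_{n,\new}=\Xi_n+h\upxi$ is simply a translate of $\Xi_n$ by a vector of length $O(h)$. Since all the defining inequalities for the resonant strata — \eqref{eqn-2.3}, \eqref{eqn-2.15}, \eqref{eqn-2.16} — are phrased in terms of $\nabla_\xi A^0$ and the lattice directions $\theta\in\Theta'_K$, and since $\nabla_\xi A^0$ is Lipschitz while the thresholds $\rho$, $\rho_n=\varepsilon^{1/2}h^{-\delta_n}$ are $\gg h$ under \eqref{eqn-2.2}, a shift by $O(h)$ changes $\langle\nabla_\xi A^0(\xi),\theta\rangle$ by at most $C h|\theta|\le Chr = o(\rho|\theta|)$; hence membership in $\Lambda(\theta)$, $\Lambda(\fV,\rho_n)$, and therefore in each $\Xi_n$, is stable under the relabelling up to enlarging the constants slightly (e.g. replacing $2\rho_n$ by $3\rho_n$ in Proposition~\ref{prop-2.6}\ref{prop-2.6-i}). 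Second, I would observe that the uniqueness statement of Corollary~\ref{cor-2.7} — the lattice space $\fV=\fV(\xi)$ attached to $\xi\in\Xi_n\setminus\Xi_{n+1}$ — is determined by $\fV(h\gamma)$ and is therefore inherited verbatim, since $\fV$ is a \emph{subspace} (a direction datum), invariant under the translation. Finally I would record that $\Omega_\tau$ itself, defined by $|A^0(\xi)-\tau|\le C\varepsilon h^{-\delta}$, is likewise stable under an $O(h)$ translation since $\varepsilon h^{-\delta}\ge h^{1-\delta}\gg h$; so intersecting with $\Omega_\tau$ in \eqref{eqn-2.16} causes no trouble.

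The only mildly delicate point — and the one I would flag as the ``main obstacle,'' though it is a minor one — is bookkeeping of the nested parameters. The radii $\omega=h^{-\varkappa}$, $r=Kh^{-\varkappa}$ and the thresholds $\rho_n=\varepsilon^{1/2}h^{-\delta_n}$ must all dominate the shift size $O(h)$ by a \emph{polynomial} margin uniformly in the range \eqref{eqn-2.2}, so that the ``up to constants'' stability above is genuinely harmless; this is guaranteed by choosing $\varkappa$ and the $\delta_j$ sufficiently small relative to $\vartheta$, exactly as already demanded in the footnote to Proposition~\ref{prop-2.6}. I would therefore simply state that \eqref{eqn-2.17} is adopted as the operative definition, with the remark that Propositions~\ref{prop-2.5}–\ref{prop-2.6} and Corollary~\ref{cor-2.7} hold for $\Xi_{n,\new}$ with the same proofs (the constants absorbing the $O(h)$ translation), and move on; the substantive work — selecting $\xi\in\Sigma_\tau$, tuning $\upxi$, and exploiting the antipodal condition \eqref{eqn-1.14} — is deferred to Section~\ref{sect-3}.
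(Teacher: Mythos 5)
Your reading is essentially correct: (2.17) is a definitional convention, not a theorem, and the paper indeed offers no proof of it; your check that the $O(h)$ shift is harmless against the thresholds $\rho,\rho_n\gg h$ is the right way to justify that Propositions~\ref{prop-2.5}--\ref{prop-2.6} and Corollary~\ref{cor-2.7} carry over. One small but substantive correction to your motivation, however: you write that $\upxi$ ``will be fixed'' in Section~\ref{sect-3}, when in fact the whole of Section~\ref{sect-3} consists of \emph{tuning} $\upxi$ by $\updelta\upxi(t)$. That is precisely \emph{why} (2.17) is needed — the redefinition makes membership of $\xi=h(\gamma+\upxi)$ in $\Xi_n$ depend on $\gamma$ alone, so that the equivalence classes $\fX(\xi)$ and hence the block decomposition of $\cA$ in Proposition~\ref{prop-2.8} stay frozen as $\upxi$ moves, which is what the eigenvalue-tracking argument requires; without it, a point could drift in or out of a resonant stratum as $t$ varies and the block $\cA_\gamma(\upxi)$ would change dimension. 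In short, your quantitative estimate is correct, but the purpose of (2.17) is to handle a \emph{varying} $\upxi$, not a fixed one, and $\Xi_{n,\new}$ is a union of cells $h\gamma+h\cO^*$ rather than a single translate of $\Xi_n$.
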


Consider $\xi',\xi''\in \Xi_n \setminus \Xi_{n+1}$. We say that $\xi'\cong \xi''$ if there exists $\xi\in \fV$, $\fV\in \cV$ such that $\xi',\xi''\in \xi +\fV$ and if $\xi'-\xi''\in \Gamma$. In virtue of above

\begin{claim} \label{eqn-2.18}
This relation is reflexive, symmetric and transitive.
\end{claim}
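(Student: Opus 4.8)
The plan is to verify the three properties directly from the definition, the only genuine input being the uniqueness of the attached lattice space supplied by Corollary~\ref{cor-2.7}. For $\xi\in\Xi_n\setminus\Xi_{n+1}$ write $\fV(\xi)\in\cV_n$ for the unique lattice space with $\xi\in\xi_0+\fV(\xi)$ for some $\xi_0\in\Lambda(\fV(\xi))\cap\Omega_\tau$, as provided by Corollary~\ref{cor-2.7}. Unwinding the definition of $\cong$ and invoking this uniqueness, one sees that $\xi'\cong\xi''$ holds if and only if $\fV(\xi')=\fV(\xi'')$, the difference $\xi'-\xi''$ lies in this common space $\fV(\xi')$, and $\xi'-\xi''\in\Gamma$. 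Once this reformulation is in hand the rest is bookkeeping.

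For reflexivity, trivially $\fV(\xi)=\fV(\xi)$, while $0$ belongs both to the linear space $\fV(\xi)$ and to the lattice $\Gamma$, so $\xi\cong\xi$. For symmetry, if $\xi'\cong\xi''$ then $\fV(\xi'')=\fV(\xi')$, and $\xi''-\xi'=-(\xi'-\xi'')$ lies in $\fV(\xi')=\fV(\xi'')$ because $\fV(\xi')$ is a vector space and in $\Gamma$ because $\Gamma$ is a lattice; hence $\xi''\cong\xi'$.

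For transitivity, assume $\xi'\cong\xi''$ and $\xi''\cong\xi'''$. The first relation gives $\fV(\xi')=\fV(\xi'')$ and the second gives $\fV(\xi'')=\fV(\xi''')$, so all three points carry the same lattice space $\fV$. Then $\xi'-\xi'''=(\xi'-\xi'')+(\xi''-\xi''')$ is a sum of two vectors of $\fV$, hence lies in $\fV$, and a sum of two vectors of $\Gamma$, hence lies in $\Gamma$. Combined with $\fV(\xi')=\fV=\fV(\xi''')$ this is exactly the condition for $\xi'\cong\xi'''$.

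I expect the only real obstacle to be the reduction carried out in the first paragraph: one must make sure that the lattice space $\fV$ appearing in the definition of $\xi'\cong\xi''$ is forced to coincide with the canonical space $\fV(\xi')=\fV(\xi'')$. This is precisely Corollary~\ref{cor-2.7}, which itself rests on Statement~\ref{prop-2.6-ii} of Proposition~\ref{prop-2.6} and the angle separation of Proposition~\ref{prop-2.5}. After that identification the three properties are nothing more than closure of a linear subspace and of a lattice under negation and addition, so no further estimates are required — which is why the author can simply say ``in virtue of above''.
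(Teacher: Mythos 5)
Your proof is correct and takes exactly the route that the paper's terse ``in virtue of above'' alludes to: Corollary~\ref{cor-2.7} assigns to each $\xi\in\Xi_n\setminus\Xi_{n+1}$ a canonical lattice space $\fV(\xi)$, and once the definition of $\cong$ is rewritten in terms of $\fV(\xi')=\fV(\xi'')$ together with $\xi'-\xi''\in\fV(\xi')\cap\Gamma$, reflexivity, symmetry and transitivity reduce to closure of the subspace $\fV$ and of the lattice $\Gamma$ under negation and addition.
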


For $\xi\in \Xi_n$ we define 
\begin{gather}
\fX(\xi)=\{\xi'\colon \xi'\cong \xi\}.
\label{eqn-2.19}\\
\shortintertext{Then}
\diam (\fX(\xi)) \le C\rho_{d-1}.
\label{eqn-2.20}
\end{gather}

\section{Structure of operator $\cA$}
\label{sect-2.3}

For  $\xi\in \Xi_n \setminus \Xi_{n+1}$ denote by $\fH(\xi)$ the subspace $\sL^2(\cO)$ consisting of functions of the form
\begin{equation}
\sum_{\xi' \in\fX( \xi) }  c_{\xi'} e^{i\langle x ,\xi'\rangle}.
\label{eqn-2.21}
\end{equation}

In virtue of the properties of $\cA$ and $\cB$ and of resonant sets we arrive to

\begin{proposition}\label{prop-2.8}
Let $\varkappa>0$ in the definition of $\Theta'$ and $\delta>0$ in the definition of $\Omega_\tau$ be sufficiently small\,\footref{foot-11}.
Let $h$ be sufficiently small. 

Then for $\xi \in  \Xi _n\setminus \Xi_{n+1}$ operators $\cB$ and $\cA$ transform  $\fH(\xi)$  into $\fH(\xi)$.
\end{proposition}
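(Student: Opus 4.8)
The plan is to show that both $\cB$ and $\cA=\cA^0(hD)+\varepsilon\cB$ preserve the finite-dimensional subspace $\fH(\xi)$ spanned by $\{e^{i\langle x,\xi'\rangle}\colon \xi'\in\fX(\xi)\}$. Since $\cA^0(hD)$ is a Fourier multiplier it acts diagonally on each exponential $e^{i\langle x,\xi'\rangle}$ by the scalar $\cA^0(h\xi')$, so $\cA^0(hD)$ trivially maps $\fH(\xi)$ into itself; hence it suffices to prove the claim for $\cB=\cB'(x,hD)=T(hD)B''(x,hD)T(hD)$ (with the new convention of Remark~\ref{rem-2.2}\ref{rem-2.2-iv}). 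Because $B''$ has the Fourier-series form \textup{(\ref{eqn-1.10})}, applying $\cB$ to $e^{i\langle x,\xi'\rangle}$ produces $\sum_{\theta}$ of terms proportional to $b''_\theta(\text{something})\,e^{i\langle x,\xi'+\theta\rangle}$, cut off by the characteristic functions $T$ forcing $\xi'$ and $\xi'+\theta$ into $\Omega_\tau$. So the whole point is the combinatorial/geometric statement: \emph{if $\xi'\in\fX(\xi)$, $\xi'+\theta\in\Omega_\tau$, and $b''_\theta\neq0$, then $\xi'+\theta\in\fX(\xi)$.}

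The key steps, in order: (1) Record that $b''_\theta\neq0$ forces, by Proposition~\ref{prop-2.1}\ref{prop-2.1-ii} (equation \textup{(\ref{eqn-2.10})}), that the relevant point lies in $\Lambda(\theta)$, i.e. $|\langle\nabla_\xi A^0,\theta\rangle|<\rho$; and by Remark~\ref{rem-2.2}\ref{rem-2.2-iii} (equation \textup{(\ref{eqn-2.13})}) that $|\theta|\le\varepsilon h^{-\delta}$, so $\theta$ is a \emph{short} resonant vector. (2) Use the definition of $\Xi_n$ and Corollary~\ref{cor-2.7}: for $\xi\in\Xi_n\setminus\Xi_{n+1}$ there is a unique lattice subspace $\fV=\fV(\xi)\in\cV_n$ with $\xi\in\xi^0+\fV$ for some $\xi^0\in\Lambda(\fV)$, and all of $\fX(\xi)$ lives in the single affine plane $\xi^0+\fV$ (intersected with $\Omega_\tau$), modulo $\Gamma$. (3) Show that the short resonant direction $\theta$ appearing above must lie in $\fV$: since $\xi'$ and $\xi'+\theta$ are both in $\Omega_\tau$ and both ``feel'' the resonance along $\theta$, $\theta$ is itself a resonant vector at these points; if $\theta\notin\fV$ then $\fV$ and the span of $\fV\cup\{\theta\}\in\cV_{n+1}$ would witness $\xi'\in\Xi_{n+1}$, contradicting $\xi\notin\Xi_{n+1}$ (here one invokes Proposition~\ref{prop-2.5} on angles between lattice subspaces and Proposition~\ref{prop-2.6} to control how resonances at $\xi$ relate to those at nearby points of $\fX(\xi)$, using $\diam\fX(\xi)\le C\rho_{d-1}$ from \textup{(\ref{eqn-2.20})} together with the scale separation $\rho_n\ll\rho_{n+1}\ll\ldots$ guaranteed by the smallness of $\varkappa,\delta$ in Proposition~\ref{prop-2.6}). (4) Conclude $\xi'+\theta\in(\xi^0+\fV)\cap\Omega_\tau$, and since $\theta\in\Gamma^*\subset\Gamma$-translation-compatible, $\xi'+\theta\cong\xi'\cong\xi$, i.e. $\xi'+\theta\in\fX(\xi)$. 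This gives $\cB e^{i\langle x,\xi'\rangle}\in\fH(\xi)$, and linearity finishes it.

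I expect the main obstacle to be step (3): controlling the interaction between the resonance structure at the base point $\xi$ and at the other points of $\fX(\xi)$, which differ from $\xi$ by vectors of size up to $C\rho_{d-1}$. One must check that $\Lambda$-membership and the lattice-subspace $\fV(\xi)$ are stable along $\fX(\xi)$ — that a short vector $\theta$ resonant at $\xi'$ is, up to the admissible error, resonant at $\xi$ as well, so that the ``no jump to $\Xi_{n+1}$'' argument genuinely applies at the base point. This is exactly where the hierarchy of scales $\delta_1<\delta_2<\cdots<\delta_n$ and the smallness conditions on $\varkappa,\delta$ (footnote~\ref{foot-11}) are used, essentially as in the corresponding argument of \cite{ParSob} and of Section~\ref{IDS-sect-2} of \cite{IvrIDS}; the verification is routine given those scale separations but is the crux of the proposition. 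The cutoffs $T(hD)$ are harmless here — they only shrink the set of surviving terms — but they are what make the statement ``$\cA$ preserves $\fH(\xi)$'' literally true rather than true modulo $O(h^M)$.
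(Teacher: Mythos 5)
The paper does not actually give a proof of Proposition~\ref{prop-2.8} --- it merely says ``In virtue of the properties of $\cA$ and $\cB$ and of resonant sets we arrive to'' the statement. So there is no written argument in the paper to compare against; your proposal is a reconstruction of an implicit argument. Your high-level reduction is the right one and matches the setup: $\cA^0(hD)$ acts by scalars on the exponentials so the whole issue is whether the Fourier modes $\theta$ surviving in $\cB=T(hD)B''(x,hD)T(hD)$ can take a point $\xi'\in\fX(\xi)$ out of $\fX(\xi)$; you correctly feed in the two facts that make $\theta$ short (\ref{eqn-2.13}) and resonant at the relevant (Weyl-midpoint) argument (\ref{eqn-2.10}), and you correctly locate the combinatorial core, namely $\theta\in\fV(\xi)$, which is forced because otherwise $\fV(\xi)\oplus\Span\theta\in\cV_{n+1}$ would witness $\xi'\in\Xi_{n+1}$, contradicting $\xi'\in\Xi_n\setminus\Xi_{n+1}$ (which holds because the relation $\cong$ is defined only on $\Xi_n\setminus\Xi_{n+1}$). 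You also correctly flag the ``stability along $\fX(\xi)$'' issue as the crux: moving from the base point $\xi_0\in\Lambda(\fV)$ to $\xi'$ changes $\nabla A^0$ by $O(\diam\fX(\xi))=O(\rho_{d-1})$ by (\ref{eqn-2.20}) and by the strong convexity (\ref{eqn-1.13}), and one must check this error is dominated by the gap $\rho_{n+1}$ so that $\xi'$ genuinely lies in some $\Lambda(\fW,\rho_{n+1})$-slab; that is exactly where the hierarchy $\delta_1<\cdots<\delta_{d-1}$ and the smallness of $\varkappa,\delta$ from footnote~\ref{foot-11} enter.

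Two small imprecisions worth tightening. First, in your step (3) you phrase the contradiction as being with ``$\xi\notin\Xi_{n+1}$''; the cleaner formulation is that it contradicts $\xi'\notin\Xi_{n+1}$, which you already have for free from the definition of $\fX(\xi)$ --- there is no need to transport the resonance back to the base point $\xi$. What you \emph{do} need to transport is the $\Lambda(\fV,\rho_n)$-membership from $\xi_0$ to (a point close to) $\xi'$ so that, combined with $\xi'+h\theta/2\in\Lambda(\theta,\rho)$, you can produce a point $\eta$ with $\eta\in\Lambda(\fW,\rho_{n+1})$ and $\xi'\in\eta+\fW$. Second, after you conclude $\theta\in\fV(\xi)$, you still need $\xi'+h\theta\in\Xi_n\setminus\Xi_{n+1}$ (not only $\xi'+h\theta\in(\xi_0+\fV)\cap\Omega_\tau$) in order to say $\xi'+h\theta\cong\xi'$; this again follows from the uniqueness in Corollary~\ref{cor-2.7} together with the same scale argument, but you should state it. Neither of these is a real gap --- they are routine given Propositions~\ref{prop-2.5}--\ref{prop-2.6} and the scale hierarchy --- but they are the places where the proof has actual content, and since the paper itself supplies none, spelling them out is what would turn your sketch into a proof.
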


Let us denote by $\cA_\gamma(\upxi)$ and $\cB_\gamma(\upxi)$ restrictions of $\cA$ and $\cB$ to $\fH(h(\gamma+\upxi))$. Here for $n=0$ we consider $\Xi_0$ to be the set of all non-resonant points and $\fX(\xi)=\{\xi\}$ for $\xi\in \Xi_0$. 

Then due to Propositions~\ref{prop-2.5} and~\ref{prop-2.8} we arrive to

\begin{proposition}\label{prop-2.9}
\begin{enumerate}[label=(\roman*), wide, labelindent=0pt]
\item\label{prop-2.9-i}
For each point $\lambda \in \Spec (A(\upxi))\cap \{|\lambda-\tau|\le  \varepsilon h^{-\delta}\}$ exists 
$\gamma\in  \Gamma^*$ such that $\xi= h(\gamma+\upxi)\in \Omega_\tau$ and 
$\dist (\lambda , \Spec (\cA_\gamma(\upxi))\le Ch^M$.
 
\item\label{prop-2.9-ii}
Conversely, for each point $\lambda \in \Spec (\cA_\gamma (\upxi))\cap \{|\lambda-\tau|\le  
\varepsilon h^{-\delta}\}$ \\ 
and $\xi= h(\xi+\gamma)$, $\dist (\lambda , \Spec (A(\upxi))\le Ch^M$.

\item\label{prop-2.9-iii}
Further, if $\lambda \in \Spec (A_\gamma (\upxi))\cap \{|\lambda-\tau|\le  \varepsilon h^{-\delta}\}$ is
a simple eigenvalue separated from the rest of  $\Spec (A(\upxi))$ by a distance at least $2h^{M-1}$, then
there exist   $\gamma$ and $\lambda'$, such that for $\xi=h(\gamma+\upxi)$,  $\lambda '\in \Spec (\cA(\xi))\cap \{|\lambda'-\lambda |\le  Ch^M\}$,
separated from the rest of $\Spec (\cA_\gamma(\xi))$ by a distance at least $h^{M-1}$ and from 
 $\bigcup_{\gamma'\in \Gamma^*,\ \gamma'\ne \gamma} \Spec (\cA_{\gamma'}(\upxi)$ by a distance at least $h^{M-1}$ as well.

\item\label{prop-2.9-iv}
Conversely, if  $\lambda '\in \Spec (\cA_\gamma(\upxi))\cap \{|\lambda-\tau|\le  \varepsilon h^{-\delta}\}$ is
a simple eigenvalue separated from the rest of $\Spec (\cA_\gamma(\upxi))$  by a distance at least $2h^{M-1}$, and also separated from  $\bigcup_{\gamma'\in \Gamma^*,\ \gamma'\ne \gamma} \Spec (\cA_{\gamma'}(\upxi))$ by a distance at least $2h^{M-1}$, 
then there exists   $\lambda \in \Spec (A(\upxi))\cap \{|\lambda'-\lambda |\le  Ch^M\}$
separated from the rest of $\Spec (A(\upxi))$  by a distance at least $h^{M-1}$.
\end{enumerate}
\end{proposition}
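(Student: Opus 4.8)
The plan is to obtain Proposition~\ref{prop-2.9} by combining the spectral comparison of Proposition~\ref{prop-2.3} (which already relates $\Spec(A(\upxi))$ to $\Spec(\cA(\upxi))$, including the simple-eigenvalue refinements) with the block structure of $\cA$. The only genuinely new ingredient is the identity
\begin{equation*}
\Spec(\cA(\upxi))\cap\{|\lambda-\tau|\le \varepsilon h^{-\delta}\}=\Bigl(\,\bigcup_{\gamma}\Spec(\cA_\gamma(\upxi))\Bigr)\cap\{|\lambda-\tau|\le \varepsilon h^{-\delta}\},
\end{equation*}
where $\gamma$ ranges over a set of representatives of the equivalence classes~(\ref{eqn-2.18}), together with the observation that only the $\gamma$ with $h(\gamma+\upxi)\in\Omega_\tau$ can contribute in this window.

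First I would record the orthogonal decomposition $\sL^2(\cO)=\bigoplus_{[\gamma]}\fH(h(\gamma+\upxi))$. By Corollary~\ref{cor-2.7} together with (\ref{eqn-2.17})--(\ref{eqn-2.19}), each exponential $e^{i\langle x,\gamma+\upxi\rangle}$ belongs to exactly one subspace $\fH(h(\gamma+\upxi))$, with this subspace one-dimensional when $h(\gamma+\upxi)$ is non-resonant (in particular when $h(\gamma+\upxi)\notin\Omega_\tau$); hence the $\fH$'s are mutually orthogonal and their sum is all of $\sL^2(\cO)$. By Proposition~\ref{prop-2.8} (and, in the one-dimensional non-resonant case, by (\ref{eqn-2.10})) $\cA$ leaves each $\fH$ invariant, so $\Spec(\cA(\upxi))=\bigcup_\gamma\Spec(\cA_\gamma(\upxi))$. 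It then remains to locate the window: if $h(\gamma+\upxi)\notin\Omega_\tau$ the inner cutoff $T(hD)$ in (\ref{eqn-2.14}) annihilates $e^{i\langle x,\gamma+\upxi\rangle}$, so $\cA e^{i\langle x,\gamma+\upxi\rangle}=\cA^0(h(\gamma+\upxi))e^{i\langle x,\gamma+\upxi\rangle}$, and since $\cA-A^0(hD)$ has norm $O(\varepsilon)$ while $\Omega_\tau$ is defined by (\ref{eqn-2.1}) with $C=6$, every such eigenvalue is at distance $\ge 5\varepsilon h^{-\delta}$ from $\tau$ for $h$ small. This establishes the displayed identity and the claim $\xi=h(\gamma+\upxi)\in\Omega_\tau$ in part~\ref{prop-2.9-i}.

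Granting all this, the four statements follow quickly. Part~\ref{prop-2.9-i} is part~\ref{prop-2.3-i} of Proposition~\ref{prop-2.3} followed by the identity; part~\ref{prop-2.9-ii} is the inclusion $\Spec(\cA_\gamma(\upxi))\subset\Spec(\cA(\upxi))$ followed by part~\ref{prop-2.3-ii} of Proposition~\ref{prop-2.3}. For part~\ref{prop-2.9-iii}, part~\ref{prop-2.3-iii} of Proposition~\ref{prop-2.3} yields $\lambda'\in\Spec(\cA(\upxi))$ with $|\lambda'-\lambda|\le Ch^M$ that is $h^{M-1}$-separated from the rest of $\Spec(\cA(\upxi))$; writing $\lambda'\in\Spec(\cA_\gamma(\upxi))$, both the rest of $\Spec(\cA_\gamma(\upxi))$ and all of $\bigcup_{\gamma'\not\cong\gamma}\Spec(\cA_{\gamma'}(\upxi))$ lie in $\Spec(\cA(\upxi))\setminus\{\lambda'\}$, so both separation assertions hold, and separation from the other blocks pins down the class of $\gamma$. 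Part~\ref{prop-2.9-iv} is the mirror image: the two separation hypotheses combine to show $\lambda'$ is $2h^{M-1}$-separated from $\Spec(\cA(\upxi))\setminus\{\lambda'\}$, and part~\ref{prop-2.3-iv} of Proposition~\ref{prop-2.3} applies verbatim.

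I do not expect a real obstacle here — the analytic content sits in Propositions~\ref{prop-2.1}, \ref{prop-2.3} and~\ref{prop-2.8}, and the rest is the bookkeeping above. The one point worth a sentence of care is that the cutoff $T(hD)$ in (\ref{eqn-2.14}) respects both the block decomposition and the bound $\|\cA-A^0(hD)\|=O(\varepsilon)$; this is clear because $T(\xi)$ is a characteristic function, so $T(hD)$ is a Fourier multiplier of norm $\le 1$ that commutes with the splitting into Fourier modes and hence with the coarser splitting into the $\fH$'s. As with Proposition~\ref{prop-2.3}, the remaining verifications are routine.
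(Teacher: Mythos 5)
Your proposal is correct and follows the same route the paper intends: the paper reduces Proposition~\ref{prop-2.9} to the comparison of $\Spec(A(\upxi))$ with $\Spec(\cA(\upxi))$ from Proposition~\ref{prop-2.3}, combined with the block decomposition of $\cA$ coming from Propositions~\ref{prop-2.5} and~\ref{prop-2.8}, and declares the remaining bookkeeping trivial. Your write-up simply supplies that bookkeeping, including the useful observation that the cutoff $T(hD)$ in (\ref{eqn-2.14}) forces every block contributing eigenvalues in $\{|\lambda-\tau|\le\varepsilon h^{-\delta}\}$ to have $h(\gamma+\upxi)\in\Omega_\tau$.
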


\begin{proof}
Proof is trivial.
\end{proof}

\chapter{Proof of Theorem~\ref{thm-1.3}}
\label{sect-3}
\section{Choosing \texorpdfstring{$\gamma^*$}{\textgamma$^*$}}
\label{sect-3.1}

The first approximation is $\xi^* \in \Sigma_\tau $ satisfying  (\ref{eqn-1.14}). Any  $\xi\in \Sigma_\tau$ in  $\epsilon'$-vicinity of $\xi^*$ also fits provided $\epsilon'>0$ is sufficiently small.

\begin{claim}\label{eqn-3.1}
One can select $\xi^*_\new \in \Sigma_\tau$  such that $|\xi^*_\new -\xi^*|\le h^\delta$ and  $\xi^*_\new$ satisfies \textup{(\ref{eqn-1.14})} with $\rho=\gamma \coloneqq h^\delta$. Here $\delta>0$ is arbitrarily small and $\varkappa = \varkappa(\delta)$.
\end{claim}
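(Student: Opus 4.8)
The goal of \textup{(\ref{eqn-3.1})} is to perturb the base point $\xi^*\in\Sigma_\tau$ slightly so that it becomes \emph{non-resonant} in the strong quantitative sense needed later, i.e.\ \textup{(\ref{eqn-2.3})} holds with the relatively large value $\rho=h^\delta$ (rather than merely $\rho\ge\varepsilon^{1/2}h^{-\delta}$), while keeping $\xi^*$ on $\Sigma_\tau$ and preserving the antipodal/non-degeneracy property \textup{(\ref{eqn-1.14})}, which is stable under small perturbations by Footnote~6 of Theorem~\ref{thm-1.1}. The plan is a measure-theoretic (pigeonhole) argument carried out on the hypersurface $\Sigma_\tau$ rather than in $\bR^d$.

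First I would work on the piece of $\Sigma_\tau$ inside the ball $\B(\xi^*,\epsilon')$, which by \textup{(\ref{eqn-1.12})}, \textup{(\ref{eqn-1.13})} is a smooth strongly convex hypersurface patch of $(d-1)$-dimensional measure comparable to $(\epsilon')^{d-1}$. On this patch, for each $\theta\in\Theta'_K\setminus 0$ consider the "bad" set $\Lambda(\theta)\cap\Sigma_\tau=\{\xi:|\langle\nabla_\xi A^0(\xi),\theta\rangle|<h^\delta\}$. The key estimate — exactly the computation behind \textup{(\ref{eqn-2.5})} — is that, because the Gauss map $\xi\mapsto\nabla_\xi A^0(\xi)/|\nabla_\xi A^0(\xi)|$ is a diffeomorphism onto a patch of $\bS^{d-1}$, the $\upmu_\tau$-measure of $\Lambda(\theta)\cap\Sigma_\tau$ is $O(|\theta|^{-1}h^\delta)$, hence $O(h^\delta)$ since $|\theta|\ge 1$ on the lattice $\Theta^*=\Gamma^*$. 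Next I would sum over $\theta\in\Theta'_K=\Theta\cap\B(0,K\omega)$ with $\omega=h^{-\varkappa}$: there are $O(\omega^d)=O(h^{-d\varkappa})$ such lattice points (times a constant depending on $K,d$), so the total measure of $\bigcup_\theta(\Lambda(\theta)\cap\Sigma_\tau)$ is $O(h^{\delta-d\varkappa})$. Choosing $\varkappa=\varkappa(\delta)$ small enough, say $\varkappa<\delta/(2d)$, this is $O(h^{\delta/2})$, which is far smaller than the measure $\asymp(\epsilon')^{d-1}$ of the patch; actually one wants it small compared to the measure of the $h^\delta$-cap around $\xi^*$ on $\Sigma_\tau$, i.e.\ compared to $h^{\delta(d-1)}$, so one instead takes $\varkappa$ small enough that $h^{\delta-d\varkappa}\ll h^{\delta(d-1)}$, e.g.\ $\varkappa<\delta/(d(d-1))$ works for $d\ge 2$ after relabeling $\delta$. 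Hence there is a point $\xi^*_\new\in\Sigma_\tau$ with $|\xi^*_\new-\xi^*|\le h^\delta$ lying outside every $\Lambda(\theta)$, i.e.\ satisfying \textup{(\ref{eqn-2.3})} with $\rho=h^\delta$.

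Finally I would note two loose ends. One must check that the antipodal point(s) $\eta$ of $\xi^*_\new$ still satisfy hypothesis \textup{(\ref{eqn-1.14})}: since $\xi^*$ satisfies it and the condition $\nabla^\alpha(f-g)(0)\ne 0$ for some $|\alpha|=2$ is an open condition on $\xi$ (and, if it degenerates, Footnote~6 lets us restore it by a further $O(h^\delta)$ move), this persists; one performs the measure argument on the (finitely many) antipodal patches simultaneously, enlarging the excluded set by a bounded factor. Also, strictly speaking the displayed claim writes "$\rho=\gamma\coloneqq h^\delta$", overloading $\gamma$; this is just notation for the chosen value of the non-resonance radius and plays no further role beyond feeding Proposition~\ref{prop-2.1} with $\rho=h^\delta\in[\varepsilon^{1/2}h^{-\delta},h^\delta]$, which is consistent since $\varepsilon\le h^\vartheta$. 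The main obstacle is bookkeeping the two competing powers of $h$: the gain $h^\delta$ per resonance hyperplane versus the loss $h^{-d\varkappa}$ from the number of lattice vectors in $\B(0,K\omega)$, and making sure the surviving good set is not merely nonempty but large enough that the subsequent arguments of Section~\ref{sect-3} — which need a whole ball $\B(\upxi^*,\upsilon)$ of good quasimomenta — have room to operate; this is what pins down the admissible range of $\varkappa$ in terms of $\delta$ and $\vartheta$.
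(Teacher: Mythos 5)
Your strategy is the same as the paper's — measure estimate from \textup{(\ref{eqn-2.5})} plus pigeonhole plus stability of \textup{(\ref{eqn-1.14})} under small perturbation — but the final numerical comparison does not close as you wrote it, and the suggested choice of $\varkappa$ does not repair it. You ask for
\begin{equation*}
h^{\delta-d\varkappa}\ll h^{\delta(d-1)},\qquad\text{i.e.}\qquad d\varkappa<(2-d)\,\delta,
\end{equation*}
and for $d\ge2$ the right-hand side is $\le 0$, so no $\varkappa>0$ works; in particular $\varkappa<\delta/(d(d-1))$ still gives $\delta-d\varkappa<\delta\le(d-1)\delta$ for all $d\ge2$, i.e.\ the inequality points the wrong way. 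The underlying issue is that \textup{(\ref{eqn-2.5})} bounds the \emph{global} $\upmu_\tau$-measure of $\Lambda\cap\Sigma_\tau$, which is $\asymp h^{\delta-(d-1)\varkappa}$ (or $h^{\delta-d\varkappa}$ with your cruder count), and this is enormously larger than the $\upmu_\tau$-measure $\asymp h^{\delta(d-1)}$ of the $h^\delta$-cap: comparing the two directly can never prove the cap is not fully covered.

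What saves the claim is that most of the resonant mass is not in the cap, so the estimate must be localized. Inside $\B(\xi^*,h^\delta)\cap\Sigma_\tau$, a given $\theta\in\Theta'_K\setminus0$ can contribute only if $|\langle\nabla A^0(\xi^*),\theta\rangle|\lesssim h^\delta|\theta|$ (Lipschitz continuity of $\nabla A^0$), which thins the relevant $\theta$'s by a factor $h^\delta$ in each dyadic shell $|\theta|\asymp k$; and each surviving strip $\Lambda(\theta)\cap\Sigma_\tau\cap\B(\xi^*,h^\delta)$ has $\upmu_\tau$-measure $\lesssim(\rho/|\theta|)(h^\delta)^{d-2}$. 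Summing in $k\le r=Kh^{-\varkappa}$ gives a resonant measure in the cap of order $\rho\,(h^\delta)^{d-1}r^{d-1}\asymp h^{\delta d-(d-1)\varkappa}$, and this is $\ll h^{\delta(d-1)}$ precisely when $\varkappa<\delta/(d-1)$; that is the dependence $\varkappa=\varkappa(\delta)$ announced in \textup{(\ref{eqn-3.1})}. (Alternatively, if one is content with $|\xi^*_{\new}-\xi^*|\le\epsilon'$ — which already suffices to preserve \textup{(\ref{eqn-1.14})}, as the sentence preceding the claim notes — then the global bound from \textup{(\ref{eqn-2.5})}, namely $r^{d-1}\rho\asymp h^{\delta-(d-1)\varkappa}\to0$, does the job; but then the stated bound $|\xi^*_{\new}-\xi^*|\le h^\delta$ needs the local estimate anyway.) Your per-$\theta$ and counting observations are fine; it is only the last comparison, global resonant measure versus local cap measure, that has to be replaced by the localized version.
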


Indeed, it  follows from (\ref{eqn-2.5}). From now on $\xi^*\coloneqq \xi^*_\new$.

Then, according to Proposition~\ref{prop-2.1} we can diagonalize operator in \mbox{$\gamma$-vicinity} of $\xi^*$ and there $\rho=\gamma$. Then there
\begin{gather}
|\nabla^\alpha (\cA^0-A^0)|\le C_\alpha (\varepsilon +\varepsilon^2 \rho^{-2-|\alpha|})
\label{eqn-3.2}\\
\shortintertext{and in particular}
|\nabla^\alpha (\cA^0-A^0)|\le C h^\delta \qquad \text{for\ \ }|\alpha|\le 2.
\label{eqn-3.3}\\
\shortintertext{Let}
\Sigma'_\tau = \{\xi\colon \cA^0(\xi) =\tau\}.
\label{eqn-3.4}
\end{gather}

Then in the non-resonant points we are interested in functions $\lambda_\gamma(\upxi)=\cA^0(h(\gamma +\upxi))$ rather than in 
$\lambda^0_\gamma(\upxi)=A^0(h(\gamma +\upxi))$. One can prove easily the following statements:

\begin{proposition}\label{prop-3.1}
\begin{enumerate}[label=(\roman*), wide, labelindent=0pt]
\item\label{prop-3.1-i}
One can select $\xi^*\coloneqq\xi^*_\new\in \Sigma'_\tau$ satisfying \textup{(\ref{eqn-1.14})} and non-resonant with $\rho=\gamma$.
\item\label{prop-3.1-ii}
Further, all antipodal to $\xi^*$ points $\xi^*_1$,\ldots, $\xi^*_{2p-1}$ have the same properties.
\end{enumerate}
\end{proposition}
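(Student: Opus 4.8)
\textbf{Proof proposal for Proposition~\ref{prop-3.1}.}

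The plan is to treat both parts as small-perturbation statements, using that $\cA^0 - A^0$ is a bounded symbol which, together with two derivatives, is $O(h^\delta)$ by \eqref{eqn-3.3}, hence a genuinely small perturbation of $A^0$ in the $C^2$-topology near $\xi^*$. First I would verify that $\Sigma'_\tau$ is a smooth hypersurface in a fixed vicinity of $\xi^*$: the microhyperbolicity and strong convexity conditions \eqref{eqn-1.12}, \eqref{eqn-1.13} hold for $A^0$ with an $\epsilon_0$-margin, and since the $C^1$-perturbation is $O(h^\delta)\ll\epsilon_0$, the same conditions hold for $\cA^0$ with margin $\tfrac12\epsilon_0$ for small $h$. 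In particular $\nabla\cA^0\ne 0$ on $\Sigma'_\tau$ near $\xi^*$, so $\Sigma'_\tau$ is a $C^\infty$ strongly convex hypersurface that is $O(h^\delta)$-$C^2$-close to $\Sigma_\tau$ there.

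For part \ref{prop-3.1-i}: starting from the point $\xi^*$ (already replaced by $\xi^*_\new$ in \eqref{eqn-3.1}, so that it lies on $\Sigma_\tau$, satisfies \eqref{eqn-1.14}, and is non-resonant with $\rho=\gamma=h^\delta$), I move along the gradient direction $\nabla A^0(\xi^*)/|\nabla A^0(\xi^*)|$ a distance $O(h^\delta)$ to reach a point on $\Sigma'_\tau$; call it $\xi^*_\new$. Non-resonance is an open condition with the quantitative slack built into \eqref{eqn-2.3} (we ask for $\ge\rho$ with $\rho=h^\delta$, and $|\nabla A^0|$ changes by $O(h^\delta)$ times the moduli of the finitely many $\theta\in\Theta'_K$, each of size $\le Kh^{-\varkappa}$); taking $\varkappa=\varkappa(\delta)$ small enough as in \eqref{eqn-3.1} this is absorbed, so $\xi^*_\new$ remains non-resonant, now for the surface $\Sigma'_\tau$ and for the gradient of $\cA^0$ (which differs from that of $A^0$ by $O(h^\delta)$ as well). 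Condition \eqref{eqn-1.14} is likewise stable: it is a statement about the Hessians of the local graphs $f,g$ representing $\Sigma_\tau$ near $\xi^*$ and near the antipodal point, together with $\nabla^\alpha(f-g)(0)\ne 0$ for some $|\alpha|=2$; by footnote~\ref{foot-6} and the strict convexity, the antipodal point depends $C^2$-smoothly on the base point and on the surface, the quantity $\nabla^\alpha(f-g)(0)$ is bounded away from zero in a fixed neighbourhood, and an $O(h^\delta)$ $C^2$-perturbation of both the base point and the defining function changes it by $O(h^\delta)$, hence keeps it nonzero. So \eqref{eqn-1.14} holds at $\xi^*_\new\in\Sigma'_\tau$.

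For part \ref{prop-3.1-ii}: by Remark~\ref{rem-1.2}, when $\Sigma_\tau$ (hence $\Sigma'_\tau$, being $C^2$-close and strongly convex) is strongly convex with $p$ components, each $\xi\in\Sigma'_\tau$ has exactly $2p-1$ antipodal points, and the map sending a base point to its set of antipodes is continuous; the points $\xi^*_1,\dots,\xi^*_{2p-1}$ antipodal to $\xi^*_\new$ on $\Sigma'_\tau$ are $O(h^\delta)$-close to the corresponding antipodes of the original $\xi^*$ on $\Sigma_\tau$. Each of them inherits: (a) non-resonance with $\rho=\gamma$, by the same openness-plus-slack argument applied at $\xi^*_j$ (the diagonalization of Proposition~\ref{prop-2.1} is valid in the $\gamma$-vicinity of any non-resonant point, so \eqref{eqn-3.2}--\eqref{eqn-3.3} hold there too); and (b) the analogue of \eqref{eqn-1.14} — one simply notes that the pair $(\xi^*_j,\text{its antipode})$ is, up to relabelling, the pair $(\text{antipode of }\xi^*_j,\xi^*_j)$, i.e. the roles of $f$ and $g$ are swapped, and $\nabla^\alpha(g-f)(0)=-\nabla^\alpha(f-g)(0)\ne 0$. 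I expect the only delicate point to be the bookkeeping in (a): one must choose the order of the small parameters correctly, fixing $\delta_1<\dots<\delta_n$ and $\delta$ first, then $\varkappa=\varkappa(\delta,\vartheta)$ small enough (as already flagged in footnotes~\ref{foot-11} and in \eqref{eqn-3.1}) so that the $O(h^\delta)$ errors, amplified by the at most $|\Theta'_K|=O(h^{-C\varkappa})$ lattice directions of length $O(h^{-\varkappa})$, still lose against the non-resonance threshold $\rho=h^\delta$; everything else is a routine application of the implicit function theorem and the strong convexity. I would also remark explicitly that these are exactly the properties needed in Section~\ref{sect-3} to run the tuning argument for $\upxi$ around $\upxi^*$.
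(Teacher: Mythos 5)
Your basic plan---treat $\cA^0$ as a $C^2$-small perturbation of $A^0$ via \eqref{eqn-3.3}, and push the conditions through by openness---is the right idea in spirit, and the stability of condition \eqref{eqn-1.14} under an $O(h^\delta)$ $C^2$-perturbation of the surface and of the base point is handled correctly. The swap $\nabla^\alpha(g-f)(0)=-\nabla^\alpha(f-g)(0)$ for the antipodal pair $(\xi^*_j,\xi^*)$ is also a good observation, though note it only covers that one pair, not the other antipodes $\xi^*_k$ of $\xi^*_j$.

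There is, however, a genuine gap in the non-resonance step, and it is exactly the bookkeeping you flagged as ``delicate.'' You move $\xi^*_\new$ by $O(h^\delta)$ along the gradient to land on $\Sigma'_\tau$ and claim non-resonance survives. But \eqref{eqn-2.3} demands $|\langle\nabla A^0(\xi),\theta\rangle|\ge\rho=h^\delta$ for all $\theta\in\Theta'_K\setminus 0$, and a displacement of size $O(h^\delta)$ changes $\langle\nabla A^0(\xi),\theta\rangle$ by $O(h^\delta\,|\theta|)=O(h^{\delta-\varkappa})$. For any $\varkappa>0$ this \emph{exceeds} the threshold $h^\delta$ as $h\to0$; taking $\varkappa$ small does not help, since one would need $\varkappa<0$. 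So the orthogonal projection of a non-resonant point need not be non-resonant, and the ``openness plus slack'' argument does not close.

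The fix is not to project but to \emph{re-select}. The microhyperbolicity and strong-convexity estimates \eqref{eqn-1.12}--\eqref{eqn-1.13} carry over to $\cA^0$ with margin $\tfrac12\epsilon_0$ by \eqref{eqn-3.3}, so the measure bound \eqref{eqn-2.5} holds verbatim for $\Sigma'_\tau$: the $\upmu_\tau$-measure of the resonant set on $\Sigma'_\tau$ is $O(r^{d-1}\rho)=O(h^{\delta-(d-1)\varkappa})$, which is small for $\varkappa<\delta/(d-1)$. Since condition \eqref{eqn-1.14} holds on $\Sigma'_\tau$ on a whole neighbourhood of positive measure around $\xi^*$ (by the $C^2$-stability you already argued), one can choose $\xi^*_\new\in\Sigma'_\tau$ from this neighbourhood avoiding the resonant set---this is the argument behind \eqref{eqn-3.1} and is what ``follows from \eqref{eqn-2.5}'' means. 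For part \ref{prop-3.1-ii}, non-resonance of the antipodes then comes for free without any perturbation argument: if $\eta$ is antipodal to $\xi$ then $\nabla A^0(\eta)=\nu\nabla A^0(\xi)$ with $|\nu|\asymp 1$, so $|\langle\nabla A^0(\eta),\theta\rangle|=|\nu|\,|\langle\nabla A^0(\xi),\theta\rangle|\ge\epsilon\rho$; there is nothing to propagate.
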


Let $\xi^*\eqqcolon h(\gamma^* +\upxi^*)$, $\gamma^*\in \Gamma^*$ and $\upxi^*\in \cO^*$. Then 
values in the nearby points are sufficiently separated:
\begin{equation}
|\lambda_\gamma (\upxi)-\lambda_{\gamma^*}(\upxi)|\ge \epsilon h^{1+\delta}
\qquad \forall \gamma\colon |\gamma-\gamma^*|\le Kh^{-\varkappa}\ \ \forall \upxi\in \cO^*.
\label{eqn-3.5}
\end{equation}
Indeed, $|\gamma-\gamma^*|\le Kh^{-\varkappa}$ implies that $(\gamma-\gamma^*)\in \Theta'_K$ and then
\begin{gather*}
|\langle \nabla \cA^0(\xi^*),\gamma-\gamma^*\rangle|\ge \gamma\\
\shortintertext{while} 
|\lambda_\gamma (\upxi)-\lambda_{\gamma^*}(\upxi)-h\langle \nabla \cA^0(\xi^*),\gamma-\gamma^*\rangle|
\le Ch^{3-3\varkappa}.
\end{gather*}

\section{Non-resonant points}
\label{sect-3.2}

Consider other non-resonant points (with $\rho =\varepsilon^{1/2}h^{-\delta}$). Let us determine how $\lambda_\gamma(\upxi)$ changes when we change $\upxi$. Due to (\ref{eqn-3.3}) 
\begin{equation}
\updelta \lambda_\gamma \coloneqq \lambda_\gamma (\upxi + \updelta\upxi)-\lambda_\gamma(\upxi)=
h \langle \nabla A^0(\xi), \updelta\upxi\rangle + O(h^2|\updelta\upxi|^2).
\end{equation}

To preserve  $\lambda _{\gamma^*} (\upxi)=\tau$ in the \emph{linearized settings\/} we need to shift $\upxi$ by $\updelta \upxi$ which is orthogonal to $\nabla _\xi \cA (\xi^*)$.

Let us take $\updelta\upxi=t\eta$
\begin{equation}
\ell\colon |\eta|=1, \qquad \langle \nabla \cA^0(\xi^*),\eta\rangle =0.
\label{eqn-3.7}
\end{equation}
Then in all non-resonant $\xi$ the shift will be $\langle \nabla _\xi \cA (\xi),\updelta\upxi\rangle$ with an absolute value 
$|\langle \nabla _\xi \cA (\xi),\eta\rangle|\cdot |t|$.

\paragraph{Case $d=2$.}
Let us start from the easiest case $d=2$. Without any loss of  the generality we assume that $\upxi^*$ is strictly inside $\cO^*$ (at the distance at least $C\epsilon^*$ from the border).  Then there is just one tangent direction $\eta$ and 
\begin{equation}
|\langle \nabla _\xi \cA^0 (\xi)|_{\xi =h\gamma},\eta\rangle |\asymp |\sin \varphi (\gamma^*,\gamma)|,
\asymp  h\min_{1\le k\le 2p} |\gamma-\gamma^*_k| 
\label{eqn-3.8}
\end{equation}
where $\varphi (\gamma^*,\gamma)$ is an angle between  $\nabla _\xi \cA^0(\xi)|_{\xi= h\gamma^*}$ and 
$\nabla _\xi \cA^0(\xi)|_{\xi= h\gamma}$, and $\xi^*_1,\ldots ,\xi^*_{2p-1}$ are antipodal points, and $\xi^*_{2p}=\xi^*$.

As long as $\min_{1\le k\le 2p} |\gamma-\gamma^*_k| \gtrsim h^{1-\varkappa}$ we may replace here $\xi=h(\gamma+\upxi)$ by $\xi=h\gamma$ and $\cA^0$ by $\cA$. In the nonlinear settings to ensure that
\begin{equation}
\lambda_{\gamma^*} (\upxi^*+\updelta\upxi(t))=\tau
\label{eqn-3.9}
\end{equation}
we need to include in $\updelta \upxi(t)$ a correction: $\updelta \upxi(t)=t\eta + O(t^2)$ but still
\begin{equation}
 \frac{d\ }{dt} \lambda_{\gamma} (\upxi^*+\updelta\upxi(t)) \asymp
h\langle \nabla _\xi \cA (\xi)|_{\xi =h\gamma},\eta\rangle ^{-1}
\label{eqn-3.10}
\end{equation}
Then the set $\cT(\xi)\coloneqq \{t\colon |t|\le \epsilon_0,\, |\cA^0(\xi (t))-\tau| \le \upsilon h\}$ is an interval of the length 
$\asymp \upsilon$ and then the union of such sets over $\xi= h\gamma+\upxi$ with indicated $\gamma$ does not exceed 
$R\upsilon$ with
\begin{equation}
R\coloneqq \sum _\gamma |\langle \nabla _\xi \cA (\xi)|_{\xi =h\gamma},\eta\rangle|^{-1},
\label{eqn-3.11}
\end{equation}
where we sum over set $\{\gamma\colon \gamma-\gamma^*|\gtrsim h^{-\varkappa}\ \&\  |\lambda_\gamma (h\gamma)-\tau|\le 2Ch\}$. The last restriction 
is due to the fact that $\cT(\xi)\ne \emptyset$ only for points with $|\lambda_\gamma (h\gamma)-\tau|\le 2Ch$. 

One can see easily that $R\asymp h^{-1}|\log h|$. Then, as $R\upsilon \le \epsilon'$ the set 
$[-\epsilon_0,\epsilon_0] \setminus \bigcup_{\gamma} \cT(h(\gamma +\upxi))$ contains an interval of the length $\ell = \upsilon $ and for all $t$, belonging to this interval, 
\begin{equation}
|\lambda_\gamma (h(\gamma +\upxi +\updelta \upxi(t)))-\tau |\ge \epsilon \upsilon h.
\label{eqn-3.12}
\end{equation} 
Then we need to take $\upsilon = \epsilon R^{-1}= \epsilon h|\log h|^{-1}$ and for $d=2$ as far as non-resonant are concerned, Theorem~\ref{thm-1.3} is almost proven\footnote{\label{foot-12} We need to cover almost antipodal points  and it will be done in the end of this subsection. We need to consider resonant points  and as well, and it will be done in the next subsection.}.

\paragraph{Case $d\ge 3$.} In this case we need to be more subtle and to make $(d-1)$ steps. We start from the point
$\xi^*=h(\gamma^*+\upxi^*)$; again without any loss of  the generality we assume that $\upxi^*$ is strictly inside $\cO^*$ (at the distance at least $C\epsilon^*$ from the border). Then after each step below it still will be the case (with decreasing constant).

\begin{enumerate}[label={\emph{Step~\Roman*}}, wide, labelindent=0pt]

\item\label{3.2-step-I}\hskip-\labelsep.\ \
On the first step we select  $\eta =\eta_1$ and consider only $\gamma $ such that (\ref{eqn-3.8}) holds; more precisely, the left-hand expression needs to be greater than the right-hand expression, multiplied by $\epsilon$\,\footnote{\label{foot-13} One can see easily, that the opposite holds.}. Then $R\asymp h^{1-d}$ and therefore exists $\upxi^*$ such that 
$\lambda_{\gamma^*} (\upxi^*)=\tau$ and 
$|\lambda_{\gamma} (\upxi^*)-\tau|\ge \epsilon \upsilon_1 h$ with 
$\upsilon_1=\epsilon h^{d-1}$ for all $\gamma$ indicated above.

\item\label{3.2-step-II}\hskip-\labelsep.\ \
On the second step we select $\eta =\eta_2$  perpendicular to $\eta_1$. To preserve inequality (\ref{eqn-3.12}) (with smaller constant $\epsilon)$ for $\gamma$, already covered by \ref{3.2-step-I}, we need to take $|\updelta \xi|\le \epsilon' \upsilon_1$ and consider $\updelta\upxi = t\eta_2 +O(t^2)$.

Then the same arguments as before results in inequality (\ref{eqn-3.12}) with
$\upsilon \coloneqq \upsilon_2= \epsilon R^{-1}\upsilon_1$ for a new bunch of points. Then for $d=3$ as far as non-resonant are concerned, Theorem~\ref{thm-1.3} is almost proven\footref{foot-13}.

\item[\emph{Next steps}]\label{3.2-step-III}\hskip-\labelsep.\ \
Continuing this process, on $k$-th step we select $\eta_k$ orthogonal to $\eta_1,\ldots,\eta_{k-1}$. Then we get $\upsilon_k =\epsilon R^{-1}\upsilon_{k-1}$ and on the last 
 $(d-1)$-th step we achieve a separation at least $\upsilon_{d-1} = \epsilon R^{1-d}$.
\end{enumerate} 

\begin{remark}
In Subsection~\ref{sect-4.1} we discuss how to increase $\upsilon$ for $d\ge 3$.
\end{remark}

\paragraph{Almost antipodal points.}
We need to cover points with $|\xi-\xi^*_k|\le h^{1-\kappa}$ for $k=1,\ldots, 2p-1$ and as we already know for each $k$ (and fixed $\upxi)$ there exists no more than one such point $\xi=h(\gamma +\upxi)$ with $|\lambda_\gamma (\upxi)-\tau|\lesssim h^{1+\delta}$. 

We take care of such points during \ref{3.2-step-I}. Observe that during this step we automatically take care of any point with
\begin{equation}
|\nabla _\xi \cA^0(\xi) ,\eta_1\rangle| \ge \epsilon h,
\label{eqn-3.13}
\end{equation}
assuming that $|t|\le \epsilon_0$ with sufficiently small $\epsilon_0=\epsilon_0(\epsilon)$.

Let us select $\eta_1$ so that on $\eta_1$  quadratic forms at points $\xi^*_1,\ldots, \xi^*_{2p-1}$ in condition (\ref{eqn-1.14}) are different from one at point $\xi^*$ by at least $\epsilon_0$. Then for each $j=1,\ldots, 2p-1$ the the measure of the set 
\begin{equation*}
\{t\colon |t|\le \epsilon_0, \, |\lambda_{\gamma_j} (\upxi +\updelta \upxi(t))|\le \upsilon h\}
\end{equation*}
does not exceed $C h^{-1}(\upsilon h)^{\frac{1}{2}}$, and then the measure of the union of such sets (by $j$) also does not exceed and therefore for $\upsilon_1 = \epsilon_1 h^{d-1}$ (for $d\ge 3$) and $\upsilon_1 = \epsilon_1 h |\log h|^{-1}$ (for $d=2$) with sufficiently small $\epsilon_1$  we can find $t\colon |t|\le \epsilon_0$ so that condition (\ref{eqn-3.8}) is fulfilled for all non-resonant points.

\section{Resonant points}
\label{sect-3.3}

Next on this step we need to separate $\lambda _{\gamma^*}(\upxi)$ from all $\lambda_n (\upxi)$ (save one, coinciding with it) by the distance at least $\upsilon h $ by choosing $\upxi$. We can during the same steps as described in the previous section: let $\lambda _{\gamma ,j }(\upxi)$ denote eigenvalues of $\cA_\gamma (\upxi)$ with $j=\#\fX(\gamma h)$. 

Observe that both $\cA_\gamma (\upxi)$ and $\#\fX(\gamma h)$ depend on the equivalency class $[\gamma]$ of $\gamma$ rather than on $\gamma$ itself and that 
\begin{equation}
\sum_{[\gamma]} \#\fX(\gamma h)=\sum_{1\le n \le d-1}\#(\Xi_n) =O(h^{1-d+\sigma'}+ \varepsilon^{3/2} h^{-d-\sigma}),
\label{eqn-3.14}
\end{equation}
where on the left $[\gamma]$ runs over all equivalency classes with  $\gamma\in \bigcup_{1\le n \le d-1}\Xi_n$. 

We also observe that for resonant points 
\begin{equation}
|\sin \varphi (\xi,\xi^*)| \ge \epsilon h^\delta
\label{eqn-3.15}
\end{equation}
and therefore for $\lambda'_{\gamma}$, which are eigenvalues of $\cA^0(h(\gamma+\upxi))$\,\footnote{\label{foot-14} Recall, that $\cA^0$ is diagonal matrix.} (\ref{eqn-3.10}) holds and signs are the same for $\gamma$ in the same block. On the other hand,
\begin{equation}
| \frac{d\ }{dt} \cB  (h(\gamma+\upxi^*+\updelta\upxi(t))| \le C \varepsilon h \ll h^{1+\delta'}
\label{eqn-3.16}
\end{equation}
and therefore for $\lambda_{\gamma, j}(t)$ which are eigenvalues of $\cA_{[\gamma]}(\upxi)$ (\ref{eqn-3.10}) sill holds.

Therefore the  arguments of each \ref{3.2-step-I}, \ref{3.2-step-II}  etc extends to resonant points as well. However the number of \emph{new points} to be taken into account on each step is given by the right-hand expression of (\ref{eqn-3.14}) and therefore $R$ needs to be redefined
\begin{equation}
R \coloneqq h^{1-d}+ \varepsilon^{3/2} h^{-d-\sigma}.
\label{eqn-3.17}
\end{equation}

This leads to the final expression (\ref{eqn-1.15}) for $\upsilon$. Theorem~\ref{thm-1.3} is proven.

\chapter{Discussion}
\label{sect-4}

\section{Improving $\upsilon$}
\label{sect-4.1}

Can we improve (increase) expression for $\upsilon$ given by (\ref{eqn-1.15})? I do not know if one can do anything  with the restriction $\upsilon \le\epsilon \varepsilon ^{-\frac{3}{2}(d-1)}h^{d(d-1)}$ which is due to resonant points, but restriction $\upsilon \le \epsilon  h^{(d-1)^2}$ could be improved for $d\ge 3$, which makes sense only if
\begin{equation}
h\le h \le h^{\frac{2}{3}-\sigma}.
\label{eqn-4.1}
\end{equation}

Indeed, on   Step $n$, $n\ge 2$,  we need to take into account only non-resonant points belonging to the set
\begin{equation}
\cJ \coloneqq  h(\Gamma^* +\upxi) \cap \{\xi\colon |\cA^0 (\xi)-\tau|\le C\upsilon_{n-1}h\}.
\label{eqn-4.2}
\end{equation}
Determination of upper estimate for such number falls into realm of the Number Theory. I am familiar only with the estimate
\begin{equation}
\# \cJ  \le Ch^{1-d} \upsilon_{n-1} + Ch^{2-d-2/(d+1)},
\label{eqn-4.3}
\end{equation}
which follows from Theorem at page 224 of \cite{Gui}. Probably it was improved, but those improvement have no value here. 

The second term in the right-hand expression of (\ref{eqn-4.2}) is larger, however, the second term  in the right-hand expression of (\ref{eqn-3.17}) is still larger and therefore on each Step $n\ge 2$ we have $R \coloneqq \varepsilon^{3/2} h^{-d-\sigma}$, and this leads us to the following improvement of Theorem~\ref{thm-1.3}:

\begin{theorem}\label{thm-4.1}
In the framework of Theorem~\ref{thm-1.1}, under additional assumptions $d\ge 3$ and \textup{(\ref{eqn-4.1})}, the statement of Theorem~\ref{thm-1.3} holds with 
\begin{equation}
\upsilon=\epsilon \varepsilon^{-3(d-2)/2} h^{d^2-d-1-\sigma} 
\label{eqn-4.4}
\end{equation}
with arbitrarily small exponent $\sigma>0$. In particular, $\upsilon= \epsilon h^{d^2-5d/2+3-\sigma}$ for $\varepsilon=h$.
\end{theorem}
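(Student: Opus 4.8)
The plan is to revisit the proof of Theorem~\ref{thm-1.3} and replace, step by step, the crude counting estimate $R\asymp h^{1-d}$ used on Steps $n\ge 2$ by the sharper lattice-point bound \eqref{eqn-4.3}, while keeping Step~\ref{3.2-step-I} and the treatment of almost antipodal points and resonant points exactly as before. First I would observe that Step~\ref{3.2-step-I} is unchanged: it produces $\upsilon_1=\epsilon h^{d-1}$ and, because of \eqref{eqn-3.13}, simultaneously disposes of all non-resonant $\gamma$ for which $|\langle\nabla_\xi\cA^0(\xi),\eta_1\rangle|\ge \epsilon h$, together with the almost antipodal points as arranged at the end of Section~\ref{sect-3.2}. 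So after Step~\ref{3.2-step-I} the only non-resonant points still to be separated lie in the thin slab $\cJ$ of \eqref{eqn-4.2}, i.e. within $O(\upsilon_1 h)$ of $\Sigma'_\tau$ in energy and within $O(h)$ of the tangent hyperplane at $\xi^*$.

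Next I would carry out Steps $n=2,\dots,d-1$. On Step $n$ the points to be newly separated are those in $h(\Gamma^*+\upxi)\cap\{|\cA^0(\xi)-\tau|\le C\upsilon_{n-1}h\}$, whose number is bounded by \eqref{eqn-4.3}: $\#\cJ\le Ch^{1-d}\upsilon_{n-1}+Ch^{2-d-2/(d+1)}$. The first term, summed against the interval-length estimate $\asymp\upsilon$ for each $\cT(\xi)$ (which still holds by \eqref{eqn-3.8}, \eqref{eqn-3.10} — the direction $\eta_n$ being transverse with $|\langle\nabla\cA^0,\eta_n\rangle|\gtrsim h$ for the relevant $\gamma$), contributes a recursion $\upsilon_n\gtrsim \epsilon h^{d-1}\upsilon_{n-1}^{-1}\cdots$ that would only reproduce the old bound; the point is that under \eqref{eqn-4.1} the error term $Ch^{2-d-2/(d+1)}$ dominates the first term, yet it is itself dominated by the resonant count $\varepsilon^{3/2}h^{-d-\sigma}$ from \eqref{eqn-3.17}. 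Hence on every Step $n\ge 2$ the effective number of obstructing points (non-resonant and resonant together) is $R\asymp\varepsilon^{3/2}h^{-d-\sigma}$, and the length of the surviving $t$-interval shrinks by this factor at each of the $d-2$ steps after Step~\ref{3.2-step-I}. Tracking the product gives $\upsilon_{d-1}\asymp\epsilon\,\upsilon_1\cdot R^{-(d-2)}=\epsilon h^{d-1}\bigl(\varepsilon^{-3/2}h^{d+\sigma}\bigr)^{d-2}=\epsilon\,\varepsilon^{-3(d-2)/2}h^{d^2-d-1-\sigma}$, which is exactly \eqref{eqn-4.4}; specializing $\varepsilon=h$ gives $\upsilon=\epsilon h^{d^2-5d/2+3-\sigma}$.

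Finally I would check that the resonant-point argument of Section~\ref{sect-3.3} goes through verbatim with the new $R$: the separation estimates \eqref{eqn-3.15}, \eqref{eqn-3.16} and the monotonicity \eqref{eqn-3.10} for the eigenvalues $\lambda_{\gamma,j}(t)$ of $\cA_{[\gamma]}(\upxi)$ did not rely on the value of $R$, only on $\varepsilon=O(h^\vartheta)$ and the structure of the blocks; and the total count \eqref{eqn-3.14} of resonant eigenvalues, $O(h^{1-d+\sigma'}+\varepsilon^{3/2}h^{-d-\sigma})$, is absorbed into $R$ as in \eqref{eqn-3.17}, since under \eqref{eqn-4.1} the second term there dominates the first. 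Thus the whole of Section~\ref{sect-3} runs with $R\asymp\varepsilon^{3/2}h^{-d-\sigma}$ on Steps $2,\dots,d-1$, and the conclusion of Theorem~\ref{thm-1.3} holds with $\upsilon$ given by \eqref{eqn-4.4}.

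I expect the main obstacle to be the bookkeeping of the recursion for $\upsilon_n$: one must be sure that on Step $n$ the slab width $\upsilon_{n-1}h$ is small enough that \eqref{eqn-4.3} applies with the stated exponents, that the first term $Ch^{1-d}\upsilon_{n-1}$ really is negligible against $Ch^{2-d-2/(d+1)}$ precisely in the range \eqref{eqn-4.1}, and that the latter in turn stays below $\varepsilon^{3/2}h^{-d-\sigma}$ — i.e. that the chain of inequalities $h^{1-d}\upsilon_{n-1}\ll h^{2-d-2/(d+1)}\ll \varepsilon^{3/2}h^{-d-\sigma}$ is maintained at every step, not merely at the first. Once this is verified the rest is a routine repetition of the Step~\ref{3.2-step-I}–Step~\ref{3.2-step-II} machinery already set up.
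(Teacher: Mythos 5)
Your proposal is correct and follows essentially the same route as the paper: invoke the lattice-point bound \eqref{eqn-4.3} to sharpen the non-resonant count on Steps $n\ge 2$, note that under \eqref{eqn-4.1} the second term of \eqref{eqn-4.3} dominates the first but is itself dominated by the resonant count $\varepsilon^{3/2}h^{-d-\sigma}$, and conclude $R\asymp\varepsilon^{3/2}h^{-d-\sigma}$ on Steps $2,\dots,d-1$, giving $\upsilon_{d-1}\asymp\epsilon\,\upsilon_1R^{-(d-2)}$. One small aside is incorrect: if the first term $Ch^{1-d}\upsilon_{n-1}$ of \eqref{eqn-4.3} were the dominant contribution, the recursion would give $\upsilon_n\asymp\epsilon R_n^{-1}\upsilon_{n-1}\asymp\epsilon h^{d-1}$ (a stationary, even better, bound), not ``reproduce the old bound'' as you claim; this does not affect your conclusion because that term is dominated, but it indicates a misreading of what the recursion does.
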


\begin{remark}\label{rem-4.2}
One can try to improve further (\ref{eqn-4.4}) for $\varepsilon \le h$. In this case resonant points become the main obstacle. In the definition of resonant points we need to take $\rho=h^{1/2-\delta}$; however only resonant points $\xi$ with
\begin{equation*}
\fX(\xi) \cap \{ \xi' \colon |\cA^0(\xi') -\tau| \le C(\upsilon_{n-1}h+\varepsilon)\}\ne \emptyset
\end{equation*}
should be taken into account on $n$-th step.
\end{remark}

\section{Condition (\ref{eqn-1.14})}
\label{sect-4.2}
We know  that for connected component $\Sigma_\tau$ this condition (\ref{eqn-1.14}) is fulfilled automatically. 

On the other hand, let $\Sigma_\tau=\bigcup_{1\le j\le p}\Sigma_\tau ^{(j)}$ with connected $\Sigma_\tau ^{(j)}$. Let $p=2$ and  condition (\ref{eqn-1.14})  be violated at each point of $\Sigma_\tau$. Does it mean that $\Sigma_\tau ^{(2)}=\Sigma_\tau ^{(1)}+ \eta$ (i.e. $\Sigma_\tau ^{(1)}$ shifted by $\eta$)?

Next, let $\Sigma_\tau ^{(j)}=\Sigma_\tau ^{(1)}+ \eta_j$ for $j=2,\ldots, q$. Then for these components instead of condition (\ref{eqn-1.14}) one can impose the similar condition involving level surfaces $\sigma_{\tau,\varepsilon}$ of $\cA^0(\xi) +\varepsilon B_0(\xi)$. 
This would affect only \ref{3.2-step-I} of our analysis, leading to $\upsilon_{1,\new}\coloneqq \min(\varepsilon h, \upsilon_1)$ with $\upsilon_1$ defined without taking into account of antipodal points. Since $\upsilon_1\le h^{d-1}$ anyway, under assumption $\varepsilon \ge h$ 
we get the same formulae for $\upsilon$ for $d\ge 3$ as stated in Theorems~\ref{thm-1.3} and~\ref{thm-4.1}, while for $d=2$ we get
\begin{equation}
\upsilon = h\min (\varepsilon , \, \varepsilon^{-3/2}h^\sigma ).
\label{eqn-4.5}
\end{equation}

\section{Differentiability}
\label{sect-4.3}
Definitely  our result would follow from the asymptotics of the \emph{density of states}
\begin{gather}
\N'_h(\tau)\coloneqq \frac{d \N_h(\tau)}{d\tau}= (\kappa'_0(\tau) +o(1))h^{-d} \qquad \text{as\ \ }h\to +0,
\label{eqn-4.6}\\
\shortintertext{where}
\N_h(\tau) = \int_{\cO^*} \N_h(\upxi,\tau)\,d\upxi 
\label{eqn-4.7}\\
\intertext{is an \emph{integrated  density of states:}}
\N_h(\upxi,\tau)=\# \{\mu <\tau,\, \mu \text{\ \ is an eigenvalue of\ \ } A_h(\upxi)\} .
\label{eqn-4.8}
\end{gather}
However, despite $\N_h(\upxi,\tau)$ has a complete asymptotics (see f.e. \cite{IvrIDS}), we do not know anything about asymptotics of $\N'_h(\tau)$ (even $\N'_h(\tau)\asymp h^{-d}$ is unknown).


\begin{thebibliography}{Ivr}

\bibitem[Gui]{Gui}
V.~Guillemin, \emph{Some classical theorems in spectral theory revised\/},
Seminar on Singularities of solutions of partial differential equations,
 Princeton University Press, NJ, 219--259 (1979).

\bibitem[Ivr1]{IvrMB}
V. Ivrii, \href{http://www.math.toronto.edu/ivrii/monsterbook.pdf}{\emph{Microlocal Analysis, Sharp Spectral, Asymptotics and Applications}}.



\bibitem[Ivr2]{IvrBMAT}
V.~Ivrii.
 \href{http://link.springer.com/article/10.1007/s13373-016-0089-y}{\emph{100 years of Weyl's law}},Bull.   Math. Sci., 6(3):379--452 (2016).%

\bibitem[Ivr3]{IvrIDS}
V.~Ivrii.
 \href{https://arxiv.org/abs/1808.01619}{\emph{Complete semiclassical spectral asymptotics for periodic and almost periodic perturbations of constant operators}},
arXiv:1808.01619 (2018).%


\bibitem[MPS]{MorParSht}
S.~Morozov, L.~Parnovski, R.~Shterenberg.
\emph{Complete asymptotic expansion of the integrated density of states of multidimensional almost-periodic
pseudo-differential operators\/},. Ann. Henri Poincaré 15(2):263--312 (2014).%


\bibitem[PS1]{ParSht1} L.~Parnovski, R.~Shterenberg. \emph{Asymptotic expansion of the integrated density of states of a two-dimensional periodic Schroedinger operator},
Invent. Math., 176(2):275--323 (2009).%

\bibitem[PS2]{ParSht2} L.~Parnovski, R.~Shterenberg. \emph{Complete asymptotic expansion of the integrated density of states of multidimensional almost-periodic Schr\"odinger operators}, Ann. of Math., Second Series, 176(2):1039--1096 (2012).%

\bibitem[PS3]{ParSht3} L.~Parnovski, R.~Shterenberg. \emph{Complete asymptotic expansion of the spectral function of multidimensional almost-periodic Schr\"odinger operators}, Duke Math. J. 165(3) 509--561 (2016).%


\bibitem[PS]{ParSob} L.~Parnovski, A. V. Sobolev. \emph{Bethe-Sommerfeld conjecture for periodic
operators with strong perturbations}, Invent. Math.,  181:467--540  (2010).%



\bibitem[So1]{Sob1} A. V. Sobolev. \emph{Asymptotics of the integrated density of states for periodic elliptic pseudo-differential operators in dimension one},
Rev. Mat. Iberoam. 22(1):55--92  (2006).

\bibitem[So2]{Sob2} A. V. Sobolev. \emph{Integrated density of states for the periodic
schr\"odinger operator in dimension two}, Ann. Henri Poincar\'e. 6:31--84 (2005) .


\end{thebibliography}
\end{document}